\pgfplotsset{compat=1.16}
\newcommand{\eps}{\varepsilon}
\newcommand{\abs}[1]{\left|#1\right|}
\newcommand{\ur}[1]{\mathrm{#1}}
\renewcommand{\d}{\ur{d}}
\renewcommand{\a}{\alpha}
\newcommand{\bb}[1]{\mathbb{#1}}
\newcommand{\N}{\bb N}
\newcommand{\Id}{{\sf Id}}
\newcommand{\il}[1]{{#1}_i^\ell}
\newcommand{\p}{\partial}
\newcommand{\pO}{\p \O}
\newcommand{\hil}[1]{\hat{#1}_i^\ell}
\newcommand{\ik}[1]{{#1}_i^k}
\newcommand{\iko}[1]{{#1}_i^{k+1}}
\newcommand{\ia}[1]{{#1}_i^{\alpha}}
\renewcommand{\O}{{\Omega}}
\newcommand{\iao}[1]{{#1}_{\O, i}^{\alpha}}
\newcommand{\bia}[1]{\bar{#1}_i^{\alpha}}
\newcommand{\biao}[1]{\bar{#1}_{\O, i}^{\alpha}}
\newcommand{\bbia}[1]{\bar{\bar{#1}}_i^{\alpha}}
\newcommand{\bbiao}[1]{\bar{\bar{#1}}_{\O, i}^{\alpha}}
\newcommand{\bian}[1]{\bar{#1}_{\pO,i}^{\a}}
\newcommand{\bbian}[1]{\bar{\bar{#1}}_{\pO,i}^{\a}}
\newcommand{\oO}{\overline{\O}}
\newcommand{\R}{\bb R}
\newcommand{\tia}[1]{\tilde{#1}_{\O,i}^{\a}}
\newcommand{\ttia}[1]{\tilde{\tilde{#1}}_{\O,i}^{\a}}
\newcommand{\si}[1]{{\mathsf{#1}}_i}
\newcommand{\sih}[1]{\hat{\mathsf{#1}}_i}
\newcommand{\sia}[1]{{\mathsf{#1}}_i^{\alpha}}
\newcommand{\siha}[1]{{\hat{\mathsf{#1}}}_i^{\alpha}}
\newcommand{\siao}[1]{{\mathsf{#1}}_{\O, i}^{\alpha}}
\newcommand{\siar}[1]{{\mathsf{#1}}^{\alpha}_{\pO, i}}
\newcommand{\siwk}[1]{{\mathsf{#1}}_i^{k,w}}
\newcommand{\sihwk}[1]{{\hat{\mathsf{#1}}}_i^{k,w}}
\newcommand{\bialo}[1]{\bar{#1}_{\O, i}^{\alpha,\ell}}
\newcommand{\bbialo}[1]{\bar{\bar{#1}}_{\O, i}^{\alpha,\ell}}
\newcommand{\smnorm}[1]{\lVert#1\rVert}
\newcommand{\norm}[1]{\left\|#1\right\|}
\newcommand{\ika}[1]{{#1}_i^\kappa}
\newcommand{\ila}[1]{{#1}_i^\lambda}
\newcommand{\tends}{\rightarrow}
\newcommand{\Fg}{\mathcal{F}}
\newcommand{\bx}{\mathbf{x}}
\newcommand{\ba}{{\boldsymbol\alpha}}
\DeclareMathOperator{\supp}{supp}
\renewcommand{\P}{{\sf \P}}
\newcommand{\pair}[2]{\langle #1,#2 \rangle}
\theoremstyle{plain}
\newtheorem{assumption}{Assumption}
\newtheorem{pseudoalgorithm}{Algorithm}
\newtheorem{definition}{Definition}
\newtheorem{remark}{Remark}
\newtheorem{lemma}{Lemma}
\newtheorem{theorem}{Theorem}
\begin{document}
\title{\bf Finite Element Approximation of Hamilton-Jacobi-Bellman equations with nonlinear mixed boundary conditions}


\author{%
{\sc
Bartosz Jaroszkowski\thanks{Email: B.Jaroszkowski@sussex.ac.uk}
and
Max Jensen\thanks{Email: M.Jensen@sussex.ac.uk}} \\[2pt]
Department of Mathematics, University of Sussex, Brighton, UK
}

\maketitle

\begin{abstract}
{We show strong uniform convergence of monotone P1 finite element methods to the viscosity solution of isotropic parabolic Hamilton-Jacobi-Bellman equations with mixed boundary conditions on unstructured meshes and for possibly degenerate diffusions. Boundary operators can generally be discontinuous across face-boundaries and type changes. Robin-type boundary conditions are discretised via a lower Dini derivative. In time the Bellman equation is approximated through IMEX schemes. Existence and uniqueness of numerical solutions follows through Howard's algorithm.}
{Finite element method; Hamilton-Jacobi-Bellman equation; Mixed boundary conditions; Fully nonlinear equation; Viscosity solution}
\end{abstract}

\section{Introduction}
\label{sec:intro}
The value function of an optimal control problem is, under suitable assumptions, the solution of a Hamilton-Jacobi-Bellman (HJB) equation. In this work we consider the numerical solution of HJB equations with mixed boundary conditions of the form:
\begin{subequations}\label{eq:BellmanibvpIntro}
\begin{alignat}{2}
-\p_t v + \sup_{\a\in A} (L^{\a} \;\; v - f^{\a})    & = 0 	&	&\qquad\text{in }[0,T)\times\O,\label{eq:BellmanpdeIntro}\\
-\p_t v + \sup_{\a\in A} \bigl( L^{\a}_{\pO} v - g^{\a} \bigr)    & = 0 	&	& \qquad\text{on }[0,T)\times\pO_t,\label{eq:BellmanrobinboundIntro}\\
\sup_{\a\in A}\bigl( L^{\a}_{\pO} v - g^{\a} \bigr)               & = 0	&	& \qquad\text{on }[0,T)\times\pO_{R},\label{eq:BellmanneumannboundIntro}\\
v - g \;\;\;  & = 0	&	&\qquad\text{on }[0,T)\times\pO_{D},\label{eq:BellmanDiriboundIntro}\\[2mm]
v - v_T \,    & = 0	&	&\qquad\text{on }\{T\}\times\oO.\label{eq:BellmanfinalIntro}
\end{alignat}
\end{subequations}
Here $L^{\a}$ and $L^{\a}_{\pO}$ denote operators on the domain $\Omega$ and its boundary, respectively. The sets $\pO_t$, $\pO_R$ and $\pO_D$ form a decomposition of $\partial \Omega$. While leaving further details of the notation to the next section, it is already apparent how the basic fully nonlinear structure of the PDE operator, meaning the left-hand side of \eqref{eq:BellmanpdeIntro}, is mirrored in the Robin-type boundary conditions \eqref{eq:BellmanrobinboundIntro} and~\eqref{eq:BellmanneumannboundIntro}. But there is a crucial, additional complication of the boundary operators $L^{\a}_{\pO}$: They will in general depend on the full gradient $\nabla v$ and not just on the tangential gradient $\nabla_{\!\pO} v$, meaning that $L^{\a}_{\pO} v$ cannot be evaluated with knowledge of $v|_{\pO}$ only.

Recalling the connection between optimal control and HJB equations, Bellman-type equations as in  \eqref{eq:BellmanrobinboundIntro} naturally arise on sections $\pO_t$ of the boundary. Indeed, the boundary condition \eqref{eq:BellmanrobinboundIntro} expresses the possibility of controlling the particle or agent on the boundary through processes which are implicitly described by the $L^{\a}_{\pO}$. In contrast, Dirichlet conditions \eqref{eq:BellmanDiriboundIntro} are appropriate for those sections $\pO_D$ of $\pO$ where the possibility to control may cease in exchange for the reward or cost of $g_D$.

Boundary conditions of type \eqref{eq:BellmanneumannboundIntro} arise from the Skorokhod control problem, which models particle reflection at the boundary \cite{Lions85, FDbook, Serea03}. Moreover, they have recently been used for the numerical solution of optimal transport problems in the setting of Monge-Amp\`{e}re equations, where the transport boundary conditions are examined in Hamilton-Jacobi form \cite{Benamou12, oblique2}.

For the authors the problem of primary interest is the Heston model of financial interest rates with an uncertain market price of volatility risk \cite{HestonPaper}. The Heston equation is most naturally posed on an unbounded domain, where already with a certain market price of volatility risk it appears with mixed boundary terms corresponding to \eqref{eq:BellmanrobinboundIntro}, \eqref{eq:BellmanneumannboundIntro} as well as \eqref{eq:BellmanDiriboundIntro}. All those types of boundary conditions remain when introducing uncertainty and when truncating the domain for the purposes of numerical approximation.

The aim of this work is to introduce a finite element method capable of computing approximations to viscosity solutions for the aforementioned problems. This paper extends results of \cite{max_SIAM} with the inclusion of mixed, fully nonlinear boundary conditions. The presented method permits degenerate diffusions. Boundary operators may exhibit discontinuities across face boundaries and where the type of boundary condition changes. 

A challenge for problems of this type is the discretisation of the first-order directional derivatives in \eqref{eq:BellmanrobinboundIntro} and~\eqref{eq:BellmanneumannboundIntro} which needs to be simultaneously consistent and monotone. On the one hand establishing monotonicity with an artificial diffusion approximating the Laplace-Beltrami operator of $\pO$ would not be sufficient because of the normal component in the directional derivatives of \eqref{eq:BellmanrobinboundIntro} and~\eqref{eq:BellmanneumannboundIntro}. On the other hand an artificial diffusion approximating the Laplace operator of $\Omega$ would not vanish under refinement due to different scaling of boundary and domain terms, thus leading to an inconsistent method. Our formulation is based on the observation that lower Dini directional derivatives exist for all functions in the P1 approximation space whenever the direction in question is in the tangent cone of $\Omega$ at the position of interest.

A benefit of the finite element approach is that besides $L^\infty$ also $L^2(H^1)$ convergence can be established on unstructured meshes, as was shown in \cite{max_SIAM,IJEN} for the Dirichlet problem. The $L^2(H^1)$ convergence is for instance important for the above mentioned Heston model \cite{HestonPaper} as Delta hedging requires knowledge of partial derivatives of the value function. 

The numerical analysis of HJB equations with Neumann and Robin conditions encompasses only few works. First results were provided by the finite difference community; we refer to the text book \cite{FDbook}. More recently, the transport boundary conditions of optimal transport were in \cite{Benamou12} approximated with a filtered wide stencil scheme.  In \cite{SLnonlinear} a nonlinear Neumann boundary operator is approximated by extending the boundary into a strip of positive thickness, allowing the boundary conditions to be treated like a PDE operator. Within the finite element setting one line of research has developed around the approximation of Cordes solutions, in \cite{oblique1} with a mixed, non-conforming finite element method while in \cite{oblique2} with a discontinuous Galerkin finite element method. Both \cite{oblique1} and \cite{oblique2} concentrate on the linear setting in non-divergence form. For a general review of the approximation of fully nonlinear equations with other types of boundary conditions we refer to \cite{review1, review2}.

The structure of this article is as follows. In Section \ref{sec:bvp} we formulate the HJB problem with mixed boundary conditions. In Section \ref{sec:num} we define the numerical method. In Section \ref{sec:mono} we prove monotonicity properties of the discretised operators. In Section \ref{sec:exis} we show the existence and uniqueness of numerical solutions. In Sections \ref{sec:cons} and \ref{sec:stab} we establish consistency and stability, respectively, leading us to our main result of convergence in Section \ref{sec:conv}. Finally, we present numerical experiments in Section \ref{sec:numexp}.

\section{Mixed final time boundary value Bellman problem} \label{sec:bvp}

In this section we introduce time-dependent Bellman equations with mixed boundary conditions. We consider a polytopic domain $\O \subset \R^d$ with $d \geq 2$, i.e.~a bounded, connected, closed domain, whose interior is non-empty and whose boundary is formed of flat faces. We allow $\Omega$ to be non-convex. Let $\mathcal{F}_k$ denote set of open $k$-dimensional faces of $\Omega$ contained in $\pO$. 

We consider Dirichlet and Robin boundary conditions on disjoint subsets of boundary $\pO$. Additionally, the region of the Robin boundary conditions breaks into two parts, one with and one without time derivative. We denote those three disjoint regions as $\pO_D$, $\pO_R$ and $\pO_t$, respectively. Therefore $\pO_D \cap \pO_R \cap \pO_t = \emptyset$ and $\pO_D \cup \pO_R \cup \pO_t = \pO$. 

It is convenient to define the notion of a {\em generalised face} as the intersection of an $\omega' \in \mathcal{F}_{d-1}$ and a region linked to a boundary condition:
\[
\Fg := \bigl\{ \omega \subset \pO : \, \omega = \omega' \cap \pO_X \text{ where } \omega' \in \mathcal{F}_{d-1}, \pO_X \in \{ \pO_D, \pO_R, \pO_t \} \bigr\}
\]
We assume that the boundary conditions are continuous on each $\omega$; however, discontinuities across (generalised) face boundaries may occur.

We introduce the normed space of piecewise continuous functions
\[
PC(\pO, \R^k) := \{ g \in L^{\infty}(\pO, \R^k) : \; g |_{\omega} \in C(\omega, \R^k) \quad \forall \, \omega \in \Fg \},
\]
equipped with the $L^\infty(\pO, \R^k)$ norm. If $k = 1$, we simply write $PC(\pO)$. We denote the standard inner product of $L^2 (\O)$ and $L^2 (\O, \R^d)$ by $\langle \cdot, \cdot \rangle$. 

Let $A$ be a compact metric space, $\a \in A$ and let $L^{\a}$ be a linear operators of the following form:
\begin{equation*}\label{linop}
L^{\a} : \; \oO \times \R \times \R^d \times \R \to \R, \; (x,q,p,s) \mapsto - a^{\a}(x) \, q - b^{\a}(x) \cdot p + c^{\a}(x) \, s.
\end{equation*}
The interpretation as differential operator follows with $q = \Delta w(x)$, $p = \nabla w(x)$ and $s = w(x)$ for $w \in C^2(\oO)$. The mapping
\[
 A \to C(\oO) \times C(\oO, \R^d) \times C(\oO) \times C(\oO),\\ \a \mapsto (a^{\alpha},  b^ {\a}, c^ {\a}, f^ {\a}),
\]
is assumed to be continuous such that the families of functions $\{ a^\a \}_{\a \in A}$, $\{ b^\a \}_{\a \in A}$, $\{ c^\a \}_{\a \in A}$ and $\{ f^\a \}_{\a \in A}$ are equicontinuous. We require that $a^{\a}(x) \geq 0$ for all $\a \in A$ so that all $L^{\a}$ are degenerate elliptic. Frequently we abbreviate $L^\a(x,\Delta w(x),\nabla w(x),w(x))$ by $L^\a w(x)$ and $x \mapsto L^\a w(x)$ by $L^\a w$.

For $\a \in A$ the Robin operators $L^\a_{\pO}$ are defined as
\begin{align} \label{eq:LBC}
L^{\a}_{\pO} : \pO \times \R^d \times \R \to \R, \; (x,p,s) \mapsto - b_{\pO}^{\a}(x) \cdot p + c_{\pO}^{\a}(x) s
\end{align}
with $b_{\pO}^{\a} \in PC(\pO, \R^d)$, $c_{\pO}^{\a} \in PC(\pO)$. The abbreviations $L^\a_{\pO} w(x)$ and $L^\a_{\pO} w$ are used analogously to $L^\a$.

We can now pose Hamilton-Jacobi-Bellman (HJB) problem, whose numerical solution is the subject of this paper:
\begin{subequations}\label{eq:Bellmanibvp}
\begin{alignat}{2}
-\p_t v + \sup_{\a\in A} (L^{\a} \;\; v - f^{\a})    & = 0 	&	&\qquad\text{in }[0,T)\times\O,\label{eq:Bellmanpde}\\
-\p_t v + \sup_{\a\in A} \bigl( L^{\a}_{\pO} v - g^{\a} \bigr)    & = 0 	&	& \qquad\text{on }[0,T)\times\pO_t,\label{eq:Bellmanrobinbound}\\
\sup_{\a\in A}\bigl( L^{\a}_{\pO} v - g^{\a} \bigr)               & = 0	&	& \qquad\text{on }[0,T)\times\pO_{R},\label{eq:Bellmanneumannbound}\\
v - g \;\;\;  & = 0	&	&\qquad\text{on }[0,T)\times\pO_{D},\label{eq:Bellmandirichletbound}\\[2mm]
v - v_T \,    & = 0	&	&\qquad\text{on }\{T\}\times\oO,\label{eq:Bellmanfinal}
\end{alignat}
\end{subequations}
with $g \in C(\pO)$, $g^{\a} \in PC(\pO)$, $v_T \in C(\oO)$ and $T \in (0,\infty)$. The suprema are applied pointwise. An interpretation of \eqref{eq:Bellmanibvp} in the context of optimal control is given in Appendix \ref{sec:interpretation}. Observe that the data terms $g^{\a}$ of the Robin conditions are $\a$-dependent, while the corresponding Dirichlet data $g$ are not. We assume equicontinuity of mapping
\[
 A \to PC(\pO, \R^d) \times PC(\pO) \times PC(\pO), \; \a \mapsto (b_{\pO}^{\a},  c_{\pO}^{\a}, g^{\a})
\]
in $\a \in A$. Additionally, we require the sign-conditions $v_T, \, c_{\O}^{\a}, \, c_{\pO}^{\a}, \, g,\,g^{\a} \geq 0$. It follows from continuity that
\begin{align} \label{rbounds}
\sup_{\alpha \in A} \| \, (b_{\pO}^{\a}, c_{\pO}^{\a}, g^{\a}) \,
\|_{L^\infty(\pO, \R^d) \times L^\infty(\pO) \times L^\infty(\pO)} < \infty.
\end{align}
We require $v_T$ to satisfy the Dirichlet boundary conditions on~$\pO_D$.

It is useful to formulate the operator used in \eqref{eq:Bellmanibvp} more succinctly as
\begin{align*} \label{def:F}
F(t,x,q,p,r,s) = & \left\{ 
\begin{array}{rllll}
- r + \sup_{\a} \bigl(L^{\a}(x,q,p,s) - f^{\a}(x)\bigr) &\text{if } (t,x) \in [0,T)\times\oO,\\
- r + \sup_{\a} \bigl( L^{\a}_{\pO}(x,p,s) - g^{\a}(x) \bigr) &\text{if } (t,x) \in [0,T)\times\pO_t, \\
\phantom{- r + \;} \sup_{\a}\bigl( L^{\a}_{\pO}(x,p,s) - g^{\a}(x) \bigr) &\text{if } (t,x) \in [0,T)\times\pO_{R}, \\
s-g(x) \;\;\; &\text{if } (t,x) \in [0,T)\times\pO_{D}, \\
s-v_T(x) \; &\text{if } (t,x) \in \{T\}\times\oO.
\end{array}
\right.
\end{align*}

We conclude the section with a definition of a viscosity solution similar to the setting of \cite{barles_souganidis} which will be used throughout the chapter. To this end, let us consider a bounded function $v:[0,T] \times \oO \to \R$ and its upper and lower semi-continuous envelopes, defined respectively as 
\[
v^*(t,x) := \limsup_{\substack{(s,y) \to (t,x) \\ (s,y) \in [0,T] \times \oO}} v(s,y)
\]
and 
\[
v_*(t,x) := \liminf_{\substack{(s,y) \to (t,x) \\ (s,y) \in [0,T] \times \oO}} v(s,y).
\]
We analogously extend the definition of lower- and upper semicontinuous envelopes to $F$.

\begin{definition}\label{def:vissol}
A bounded function $v$ is a viscosity supersolution (respectively, subsolution) of \eqref{eq:Bellmanibvp} if, for any test function $\psi \in C^2(\R \times \R^d)$,
\[
F^*(t, x, \Delta \psi(t,x), \nabla \psi(t,x), \partial_t \psi(t,x), v_*(t,x)) \geq 0,
\]
(respectively, 
\[
\left. F_*(t, x, \Delta \psi(t,x), \nabla \psi(t,x), \partial_t \psi(t,x), v^*(t,x)) \leq 0, \right)
\]
provided that $v_{*}-\psi$ attains a local minimum (respectively, $v^{*}-\psi$ attains a local maximum) at $(t,x) \in [0,T] \times \oO$. Finally, we call $v:[0,T] \times \oO \to \R$ a viscosity solution of \eqref{eq:Bellmanibvp} if it is simultaneously a viscosity sub- and supersolution of \eqref{eq:Bellmanibvp}.
\end{definition}

Here the value of
\[
F^*(t, x, \Delta \psi(t,x), \nabla \psi(t,x), \partial_t \psi(t,x), v_*(t,x))
\]
should only depend on $\psi$'s restriction to $[0,T] \times \oO$. However, generally for lower-dimensional faces $F \in \mathcal{F}_{d-2}$ one finds test functions $\psi, \phi \in C^2(\R \times \R^d)$ with $\psi|_{[0,T] \times \oO} = \phi|_{[0,T] \times \oO}$ such that $\nabla \psi(t,x) \neq \nabla \phi(t,x)$ for $x \in F$. 
 
We therefore demand that the coefficient $b_{\pO}^{\a}(x)$ of \eqref{eq:LBC} belongs to the tangent cone:
\begin{align} \label{eq:conecond}
b_{\pO}^{\a}(x) \in K(x) \qquad \forall \, \a \in A, x \in \pO.
\end{align}
Here, because of the polytopic nature of the domain, we define the tangent cone $K(x)$ as
\begin{align*}
K(x) := \bigl\{ x' \in \R^d \big| \; \exists \, \Lambda \in (0,\infty) \; \forall \, \lambda \in [0,\Lambda] : x + \lambda \, x' \in \oO \bigr\},
\end{align*}
i.e.~$x'$ is in the cone if there is a line segment from $x$ in the direction of $x'$ which is contained in $\oO$. Indeed, for $b_{\pO}^{\a}(x) \in K(x) \setminus \{ 0 \}$ we observe how
\begin{align} \label{eq:dirder}
- b_{\pO}^{\a}(x) \cdot \nabla \psi(t,x) = \partial_{-b_{\pO}^{\a}(x)} \psi(t,x) = \lim_{\substack{\lambda \to 0\\[.3mm] \lambda > 0}} \frac{\psi(t,x) - \psi(t,x + \lambda \, b_{\pO}^{\a}(x))}{\lambda}
\end{align}
is expressed only referring to $\psi$ on $[0,T] \times \oO$ and thus independently of $\psi$'s extension to $\R \times \R^d$. The limit on the right-hand side of \eqref{eq:dirder} is known as the lower Dini derivative of $\psi$ in direction $- b_{\pO}^{\a}(x)$. On smooth sections of the boundary and at outward pointing corners the requirement \eqref{eq:conecond} corresponds to an outflow condition, while at re-entrant corners \eqref{eq:conecond} may permit an inflow term. In that sense, \eqref{eq:conecond} is less restrictive than oblique boundary conditions such as \cite[(7.35)]{Crandall:1992ta}. We remark, however, that strengthened versions such as \cite[(7.35)]{Crandall:1992ta} may be necessary to ensure the existence of a comparison principle for the final time boundary value problem of the specific application of interest.

\section{Numerical scheme} \label{sec:num}

For the discretisation of \eqref{eq:Bellmanibvp} we consider a sequence $V_i, i \in \N$, of piecewise linear, simplicial, shape-regular finite element spaces. Let $\mathcal{T}_i$ be the mesh corresponding to the finite element space $V_i$. The boundary mesh $\mathcal{B}_i$ consists of the $(d-1)$-dimensional faces $F$ of elements $K \in \mathcal{T}_i$ with $F \subset \pO$. We make the assumption that $\mathcal{B}_i$ is subordinate to $\Fg$, i.e.~every open $F \in \mathcal{B}_i$ is contained entirely in exactly one generalised face~$\omega \in \Fg$.

Let $V_i^g \subset V_i$ be the affine subspace of functions which interpolate the Dirichlet boundary data on $\pO_D$ and $V_i^{0} \subset V_i$ be the vector subspace of functions which interpolate $0$ on $\pO_D$. The nodes of the finite element mesh are denoted by $\il y$. Here the index $\ell$ ranges over the nodes in the interior first, then the nodes on $\pO_t$, then $\pO_R$ and finally $\pO_D$. Therefore, $\il y \in \O$  for $\ell \leq N_i^{\O}$ for some $N_i^{\O} \in \N$ denoting number of interior nodes, $\il y \in \O \cup \pO_t$ for $\ell \leq N_i^{t}$ for some $N_i^{t} \in \N$ and, lastly, $\il y \in \O \cup \pO_t \cup \pO_{R}$ for $\ell \leq N_i := \dim V_i^g$. These nodes $\il y \in \O \cup \pO_t \cup \pO_{R}$ are called non-Dirichlet nodes.

The associated hat functions $\il \phi \in V_i$ are chosen so that $\il \phi(\il y) = 1$ while $\il \phi(y_i^s) = 0$ for $\ell \neq s$. Set $\hil \phi := \il \phi / \| \il \phi \|_{L^1(\O)}$. Therefore, the $\il \phi$ are normalised in the $L^\infty(\oO)$ norm whilst the $\hil \phi$ are normalised in the $L^1(\oO)$ norm.

The mesh size, i.e. the largest diameter of an element, is denoted $\Delta x_i$. It is assumed that $\Delta x_i \to 0$ as $i \to \infty$. The uniform time step size is denoted $h_i$ with the constraint that $T / h_i \in \N$.  It is assumed that $h_i \to 0$ as $i \to \infty$. Let $\ik s$ be the $k$th time step at the refinement level~$i$. Then the set of time steps is $S_i := \bigl\{ \ik s : k = T/h_i, \dots, 0 \bigr\}$. 

We introduce the operator $d_i$, which approximates the time derivative on $\Omega$ and $\pO_t$ but which is $0$ on the remaining boundary. More precisely, we let the $\ell$th entry of $d_i w( \ik s, \cdot)$ be
\begin{equation*}
(d_i w( \ik s, \cdot))_\ell = 
\begin{cases} \frac{w( \iko s, \il y) - w(\ik s, \il y)}{h_i} \quad &\ell \leq N_i^{t},
\\
\hspace{0.12\textwidth} 0 &\textrm{otherwise.}
\end{cases}
\end{equation*}
Observe how $(d_i w( \ik s, \cdot))_\ell = 0$ for nodes $\il y \in \pO_R$ is consistent with the structure of~\eqref{eq:Bellmanneumannbound}.

For each discretisation of \eqref{eq:Bellmanibvp} we allow a splitting of $L^{\a}$ and $L^{\a}_{\pO}$ into an explicit and an implicit part. For each $\alpha$ and for each $i$, we introduce the explicit operator $\iao E$ and the implicit operator $\iao I$ such that
\begin{align*}
\iao E &: \; C^2(\oO) \to C(\oO), \; w \mapsto - \biao  a \, \Delta w \, - \biao  b \cdot \, \nabla w + \biao  c \, w,\\
\iao I &: \; C^2(\oO) \to C(\oO), \; w \mapsto - \bbiao a \, \Delta w \, - \bbiao b \cdot \, \nabla w + \bbiao c \, w,
\end{align*}
where $\biao a, \bbiao a, \biao c, \bbiao c \in C(\oO)$ and $\biao b, \bbiao b \in C(\oO, \R^d)$. Assumption~\ref{ass:consistency} below shows that the explicit and implicit operators are chosen such that $\iao I + \iao E$ approximates $L^{\a}$. Analogously, we introduce non-negative $f_i^{\a} \in C(\oO)$ which approximate $f^{\a}$. 

The discretisations of $\iao E$ and $\iao I$ are the mappings from $V_i$ to $\R^{N_i}$ which are given by
\begin{subequations}\label{eq:discreteopint}
\begin{align}
(\siao E w)_\ell := & \, \biao  a(\il y) \langle \nabla w, \nabla \hil \phi \rangle + \langle -\biao  b \cdot \nabla w + \biao  c \, w, \hil \phi \rangle,\\
(\siao I w)_{\ell} := & \, \bbiao a(\il y) \langle \nabla w, \nabla \hil \phi \rangle + \langle - \bbiao b \cdot \nabla w + \bbiao c w, \hil \phi \rangle, \label{sch:impo}\\
(\siao F)_\ell := & \, \langle \ia f, \hil \phi \rangle,
\end{align}
\end{subequations}
where $\ell$ ranges over all internal nodes,~i.e. $\ell \leq N^\Omega_i$. For boundary nodes $\ell > N^\Omega_i$ we set $(\siao E w)_\ell = (\siao I w)_\ell = (\siao F)_\ell = 0$. Because of the scaling of the $\hil \phi$, integration-by-parts gives for smooth $w$ and large $i$ that $\langle \nabla w, \nabla \hil \phi \rangle \approx - \Delta w(x)$ if $x \approx \il y$ away from the boundary. 

Similarly, we define operators $\siar E$ and $\siar I$ on the boundary to discretise $L^{\a}_{\pO}$ as the sum of an explicit and implicit part. Starting point is the observation that the directional derivative $\partial_{-b_{\pO}^{\a}} w$ is well-defined in the sense of \eqref{eq:dirder} for functions $w \in V_i$ even though $w$ is in general not differentiable.

For interior nodes with index $0 \le \ell \leq N_i^\Omega$ we set $(\siar E w)_\ell = (\siar I w)_\ell = (\siar F)_\ell = 0$. More interestingly, for $N_i^\Omega < \ell \leq N_i$ ranging over the nodes of the Robin boundary conditions, we define the mappings from $V_i$ to $\R^{N_i}$ by
\begin{subequations}\label{eq:discreteopbound}
\begin{align}
(\siar E w)_\ell & := \partial_{-\bian  b(\il y)} w(\il y) + \bian  c(\il y) \, w(\il y),\\
(\siar I w)_\ell & := \partial_{-\bbian  b(\il y)} w(\il y) + \bbian  c(\il y) \, w(\il y),\\
(\siar F)_\ell & := \ia g(\il y),
\end{align}
\end{subequations}
where $\bian c, \bbian c, \ia g \in PC(\pO)$ and $\bian b, \bbian b \in PC(\pO, \R^d)$. Here $\partial_{-\bian  b}$ and $\partial_{-\bbian  b}$ are understood as lower Dini derivatives as in \eqref{eq:dirder}. 

On the Dirichlet boundary the mappings $\siar E$, $\siar I$ and $\siar F$ implement nodal interpolation. For $\ell > N_i$ we set
\begin{subequations}\label{eq:discreteopDir}
\begin{align}
(\siar E w)_\ell & := 0,\\
(\siar I w)_\ell & := w(\il y),\\
(\siar F)_\ell & := g(\il y).
\end{align}
\end{subequations}

We assume a fully implicit discretisation of the region $\pO_R$; additionally, suppose that $\bbian c$ is chosen positive on $\pO_R$, even if $c_{\pO}^{\a}=0$. 

In summary, we require that the following assumption holds.
\begin{assumption} \label{ass:consistency}
The coefficients satisfy
\begin{align*}
\lim_{i \to \infty} \sup_{\a\in A} \Bigl( & \sup_{0 \le \ell \le N_i} \bigl\| a^\alpha - \bigl( \biao a(\il y) + \bbiao a(\il y) \bigr) \bigr\|_{L^\infty({\rm supp} \, \hil \phi)} \\
& + \bigl\| b^\alpha - \bigl( \biao b + \bbiao b \bigr)  \bigr\|_{L^\infty(\O,\R^d)}
 +  \bigl\| c^\alpha - \bigl( \biao c + \bbiao c \bigr) \bigr\|_{L^\infty(\O)} \\
& +\bigl\| f^\alpha - \ia f \bigr\|_{L^\infty(\O)}  \Bigr) = 0
\end{align*}
and
\begin{align*}
\lim_{i \to \infty} \sup_{\a\in A} \Bigl( & \bigl\| b_{\pO}^{\a} - \bigl( \bian b + \bbian b \bigr) \bigr\|_{L^\infty(\pO)} + \bigl\| c_{\pO}^{\a} - \bigl( \bian c + \bbian c \bigr) \bigr\|_{L^\infty(\pO)}\\
& + \bigl\| f^{\a} - f_i^{\a} \bigr\|_{L^\infty(\pO)} + \bigl\| g - g_i \bigr\|_{L^\infty(\pO)} + \bigl\| g^{\a} - g_i^{\a} \bigr\|_{L^\infty(\pO)} \Bigr) = 0.
\end{align*}
We require that the family
\begin{align*} 
\begin{array}{ll}
\{ (\biao a, \biao b, \biao c, \bian b, \bian c, \bbiao a, \bbiao b, \bbiao c, \bbian b, \bbian c, f_i^{\a}, g_i^{\a}) \}_{\a \in A}
\end{array} \end{align*}
is equicontinuous and depends continuously on $\a$. We impose $\biao a = \biao c = \bian c = 0 \in \R$ and $\biao b = \bian b = 0 \in \R^d$ as well as $\bbian c > 0$ on the restriction to $\pO_R$, $i \in \N$.
\end{assumption}

We define 
\[
\sia E = \siao E + \siar E, \quad \sia I = \siao I + \siar I, \quad \sia F = \siao F + \siar F.
\]
We also use the notation $\sia I$,\,$\sia E$ and $\sia F$ for the matrix representations of exactly these $\sia I$,\,$\sia E$ and $\sia F$ with respect to the nodal basis $\{\il{\phi}\}_\ell$ for the trial functions. Moreover, we assume that the supremum operator is applied componentwise, i.e.~$(\sup_{\a}v^{\a})_\ell = \sup_{\a}v_\ell^{\a}$ for $v \in \R^n$. The expression $a \lesssim b$ means that there exists a generic constant $C > 0$, independent of $i$ and $\a$, such that $a \le C b$. Relation $a \gtrsim b$ is defined analogously.

We can now state the numerical scheme used to approximate the solution of~\eqref{eq:Bellmanibvp}. We initialise the scheme by the nodal interpolation of $v_T$ so that $v_i(T, \cdot) \in V_i^g$. Then, in order to find the numerical solution $v_i (\ik s, \cdot) \in V_{i}^g$, we proceed inductively over the remaining timesteps $k \in \left\{T/h_i-1, \dots, 1, 0\right\}$:

\begin{align} \label{eq:numsol}
- d_i v_i(\ik s, \cdot) +  \sup_{\alpha\in A} \bigl( \sia E v_i(\iko s,\cdot) + \sia I v_i(\ik s,\cdot) - \sia F \bigr) = 0.
\end{align}

We also use an alternative formulation of the numerical scheme. The matrices $\siwk E$, $\siwk I$ and $\siwk F$ are constructed row-wise out of the matrices $\sia E$, $\sia I$ and $\sia F$. More precisely, given a node $\il y$, timestep $\ik s$ and a function $w(\ik s, \cdot) \in H^1(\oO)$, let $\hat{\a}$ be a maximiser of 
\[
\sup_{\alpha\in A} \bigl( \sia E w(\iko s,\cdot) + \sia I w(\ik s,\cdot) - \sia F \bigr)_\ell = 0.
\]
Note that choice of $\hat{\a}$ is not necessarily unique; the analysis is valid for any choice of such $\hat{\a}$. We let the $\ell$th row of $\siwk E$, $\siwk I$ and $\siwk F$ be equal to the $\ell$th row of $\mathsf{E}_i^{\hat{\alpha}}$, $\mathsf{I}_i^{\hat{\alpha}}$ and $\mathsf{F}_i^{\hat{\alpha}}$, respectively. In a non-ambiguous case we will omit explicit mention of $k$ and simply write $\si E^w$, $\si I^w$ and $\si F^w$. We can now reformulate \eqref{eq:numsol} using the newly constructed operators. We initialize the scheme with the interpolant $v_i(T, \cdot)$. Then $v_i \in V_i^g$ for each~$k \in \{T/h_i-1, \dots, 1,0\}$ and for $0 \leq \ell \leq N_i^{t}$ solves
\begin{subequations}\label{eq:_alnumsol}
\begin{equation}
\Bigl( (h_i \si I^{k,v_i} + \Id)\, v_i(\ik s, \cdot) + (h_i \si E^{k,v_i} - \Id)\, v_i(\iko s, \cdot) - h_i \si F^{k,v_i} \Bigr)_\ell = 0 \\
\end{equation}
and for each~$k \in \{T/h_i-1, \dots, 1,0\}$ and for $N_i^t < \ell$ solves
\begin{equation}
\Bigl( h_i \si I^{k,v_i} \, v_i(\ik s, \cdot) - h_i \si F^{k,v_i} \Bigr)_\ell = 0,
\end{equation}
\end{subequations}
recalling the implicit discretisation on $\pO_R \cup \pO_D$, enforced through \eqref{eq:discreteopDir} and Assumption~\ref{ass:consistency}.

For the sake of convenience let us also introduce the operators $\sih I^{k,v_i}$, $\sih E^{k,v_i}$ and $\sih F$ which combine spatial and temporal terms. The $\ell$th row of $\sih I^{k,v_i}$, $\sih E^{k,v_i}$ and $\sih F$ is equal to that of $(h_i \si I^{k,v_i} + \Id)$, $(h_i \si E^{k,v_i} - \Id)$ and $h_i \si F^{k,v_i}$, respectively, if $\leq \ell \leq N_i^{t}$. If $N_i^t < \ell$, the $\ell$th row is equal to $(h_i \si I^{k,v_i})$, a zero vector and $h_i \si F^{k,v_i}$, respectively. For a fixed control $\a$, the operators $\siha I$, $\siha E$ and $\siha F$ are constructed in an analogous manner. Then for all timesteps $\ik s$ and each node $\il y$ solution $v_i$ of \eqref{eq:_alnumsol} solves also:
\begin{equation}\label{eq:alnumsol}
    \sih I^{k,v_i}\, v_i(\ik s, \il y) +\sih E^{k,v_i}\, v_i(\iko s,\il y) - \sih F^{k,v_i} = 0.
\end{equation}

\begin{remark}
To implement the lower Dini derivative $\partial_{-\bian  b(\il y)} w(\il y)$ in a computer code, we note that for $\lambda > 0$ sufficiently small there is an element $K \in \mathcal{T}_i$ whose closure contains both $\il y$ and $\il y + \lambda \, \bian b(\il y)$. Indeed, choosing $\lambda$ such that $\lambda \, \bian b(\il y)$ is smaller than the smallest element edge diameter for all $N_i^{\O} < \ell \leq N_i$ achieves this. We then have
\[
\partial_{- \bian b(\il y)} w(\il y) = \frac{w(\il y) - w(\il y + \lambda \, \bian b(\il y))}{\lambda}.
\]
Importantly, because $w \in V_i$ is affine on $\overline{K}$, even without taking a limit $\lambda \to 0$ as on the right-hand side of \eqref{eq:dirder} the Dini derivative is obtained exactly.
\end{remark}

\section{Monotonicity} \label{sec:mono}

In this section we consider monotonicity properties of the discrete differential operators defined in the previous section. Monotonicity is crucial for proving the existence of a unique numerical solution of \eqref{eq:numsol} as well as for establishing convergence to the viscosity solution. 

\begin{definition} \label{def:lmp_wdmp}
Let us consider $v \in V_i$ that has a local non-positive minimum at a node $\il y$. We say that an operator $F$ satisfies a Local Monotonicity Property (LMP) if for any such $v$ it follows that $(Fv)_\ell \leq 0$. Additionally, the operator $F$ satisfies the weak Discrete Maximum Principle (wDMP) provided that, for any $v \in V_i$,
\begin{equation}\label{eq:wDMP}
\bigl( F v \bigr)_{\ell} \geq 0 \;\; \forall \, \ell \in \{1,\dots, N_i\} \;\; \implies \;\; \min_{\O \cup \pO_{R}\cup \pO_t} \!\! v \geq \min \{ \min_{\pO_D} v, 0 \}.
\end{equation}
\end{definition}

We now describe a method for choosing the artificial diffusion coefficients to impose the LMP on the matrices $\sia I$ and $\sia E$. It is based on the assumption of strict acuteness on the mesh. Consider an element $K \in \mathcal{T}_i$ with diameter $\Delta x_K$. For a bounded function $g: \oO \to \R^d$ we define $g$'s norm on the restriction to $K$ as
\[
|g|_K:= \Bigl( \sum_{j=1}^{d} \Bigl\| g_j \Bigr\|_{L^\infty(K)}^2 \Bigr)^{\frac{1}{2}}.
\]
Then by strict acuteness of the meshes we mean that there exists a $\theta \in (0, \frac{\pi}{2})$ such that the following holds:
\begin{equation} \label{eq:strict_acuteness}
\nabla \il \phi \cdot \nabla \phi_{i}^{l} \bigl|_K \leq - \, \sin(\theta) \; | \nabla \il \phi |_K \; | \nabla \phi^{l}_i |_K \qquad \forall \ell,l \leq N_i,\;\ell \neq l, \; \forall K\in\mathcal{T}_i.
\end{equation}
We say that the family of meshes $\{ \mathcal{T}_i \}_i$ is uniformly strictly acute if $\theta$ does not depend on $i$. As discussed in \cite{BUR}, for $d=2$ and $d=3$ the angle $\theta$ can be interpreted geometrically as $\frac{\pi}{2}$ minus the largest angle between the pairs of $(d-1)$-dimensional faces of the element~$K$. 

\subsection{The LMP of $\siao E$, $\siar E$, $\siao E$ and $\siar I$}\label{subsec:LMPbound}

Let the functions $\tia a, \ttia a, \biao c, \bbiao c \in C(\oO)$ and $\biao b, \bbiao b \in C(\oO, \R^d)$ be given. These functions may be chosen freely as long as Assumption \ref{ass:consistency} holds. Conceptually $\tia a + \ttia a \approx a^\a$ is the splitting of the second-order coefficients into explicit and implicit part {\em without} the addition of artificial diffusion. With the addition of artificial diffusion, the coefficients $\bia a$ and $\bbia a$ of Assumption~\ref{ass:consistency} are obtained.

Indeed, as $\biao b, \bbiao b, \biao c, \bbiao c$ are bounded, we can select non-negative artificial diffusion coefficients $ \bialo \nu $ and $ \bbialo \nu $ so that we have for all interior nodes $\il y$ and mesh elements $K$ with $\il y$ as vertex that
\begin{align} \label{eq:artdiff} \begin{array}{c}
| \biao b |_K \, +  \Delta x_K \|  \biao c \|_{L^\infty(K)} \le \bialo \nu \, \sin(\theta) \, | \nabla \hil \phi |_K \, {\rm vol}(K), \\
| \bbiao b |_K \, +  \Delta x_K \| \bbiao c \|_{L^\infty(K)} \le \bbialo \nu \, \sin(\theta) \, | \nabla \hil \phi |_K \, {\rm vol}(K). 
\end{array} \end{align}
Now, choosing $ \biao a, \bbiao a \in C(\oO)$ such that 
\begin{align} \label{eq:artdiffcoef}
\biao a(\il y) \ge \max \bigl\{ \tia a(\il y), \bialo \nu \bigr\}, \qquad \bbiao a(\il y) \ge\max \bigl\{ \ttia a(\il y), \bbialo \nu \bigr\},
\end{align}
we obtain our splitting of $L^{\a}$ into implicit and explicit part. 

\begin{lemma}
Suppose that the mesh $\mathcal{T}_i$ is strictly acute and that \eqref{eq:artdiff} holds. Then $\siao E$ and $\siao I$ satisfy the LMP for all $\a$.
\end{lemma}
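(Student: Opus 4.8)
The plan is to verify the Local Monotonicity Property directly from the definition: take $v \in V_i$ with a local non-positive minimum at a node $\il y$ with $0 \le \ell \le N_i^\Omega$ (for $\ell > N_i^\Omega$ the operator entries vanish by definition, so there is nothing to prove), and show $(\siao E v)_\ell \le 0$ and $(\siao I v)_\ell \le 0$. I will treat $\siao E$; the argument for $\siao I$ is identical with the barred quantities replaced by the double-barred ones, using the second line of \eqref{eq:artdiff} instead of the first. First I would expand
\[
(\siao E v)_\ell = \biao a(\il y)\, \langle \nabla v, \nabla \hil\phi \rangle + \langle -\biao b \cdot \nabla v + \biao c\, v,\ \hil\phi\rangle
\]
and split the integrals over the elements $K$ having $\il y$ as a vertex (these are the only $K$ on which $\hil\phi$ is supported). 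On each such $K$, write $v$ in the nodal basis, $v = \sum_{l} v(y_i^l)\, \phi_i^l$, so that $\nabla v|_K = \sum_{l} v(y_i^l)\, \nabla \phi_i^l|_K$, and recall $\sum_l \nabla\phi_i^l|_K = 0$, which lets me replace $v(y_i^l)$ by the increments $v(y_i^l) - v(\il y)$ in the stiffness term.

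The key structural step is the standard acute-mesh estimate. Because $v$ has a local minimum at $\il y$, we have $v(y_i^l) - v(\il y) \ge 0$ for every neighbouring node $y_i^l$. Using \eqref{eq:strict_acuteness}, the diagonal-free part of the stiffness contribution satisfies, on each $K \ni \il y$,
\[
\biao a(\il y)\, \langle \nabla v, \nabla \hil\phi\rangle_K \;=\; \biao a(\il y)\!\!\sum_{l\neq \ell}\! \bigl(v(y_i^l)-v(\il y)\bigr)\, \nabla\phi_i^l\!\cdot\!\nabla\hil\phi\big|_K \,\vol(K) \;\le\; -\,\biao a(\il y)\,\sin(\theta)\,|\nabla\hil\phi|_K \sum_{l\neq\ell}\bigl(v(y_i^l)-v(\il y)\bigr)|\nabla\phi_i^l|_K\,\vol(K),
\]
so the stiffness term is bounded above by a negative multiple of $\sum_{l\neq\ell}(v(y_i^l)-v(\il y))$. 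The lower-order term $\langle -\biao b\cdot\nabla v + \biao c\, v,\ \hil\phi\rangle$ is then controlled in absolute value on each $K$ by $(|\biao b|_K\,|\nabla v|_K + \|\biao c\|_{L^\infty(K)}\,\|v\|_{L^\infty(K)})\,\langle 1,\hil\phi\rangle_K$; estimating $|\nabla v|_K \le \sum_{l\neq\ell}(v(y_i^l)-v(\il y))\,|\nabla\phi_i^l|_K$ and $\|v\|_{L^\infty(K)} \le \|v\|_{L^\infty(K)}$ — here one uses $v(\il y)\le 0$ together with $v(y_i^l)\ge v(\il y)$ to bound $\|v\|_{L^\infty(K)}$ on $K$ by $\Delta x_K$ times the increments plus a non-positive term — reduces everything to the same increments $v(y_i^l)-v(\il y)\ge 0$. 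Condition \eqref{eq:artdiff} (combined with $\biao a(\il y)\ge\bialo\nu$ from \eqref{eq:artdiffcoef}, which is part of the hypothesis through Assumption~\ref{ass:consistency}) is exactly what guarantees the negative stiffness contribution dominates the lower-order terms elementwise, leaving $(\siao E v)_\ell \le 0$. Summing over $K \ni \il y$ finishes the proof.

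I expect the main obstacle to be the bookkeeping around the reaction term $\langle \biao c\, v, \hil\phi\rangle_K$: unlike the advection term it does not see gradients, so one cannot directly feed it increments of $v$. The resolution is precisely the non-positivity of the minimum — on $K$ one writes $v = v(\il y) + (v - v(\il y))$, notes $v(\il y) \le 0$ makes $\biao c\, v(\il y)\langle 1,\hil\phi\rangle_K \le 0$ (using $\biao c \ge 0$), and bounds the remaining $\biao c\,(v - v(\il y))$ by $\|\biao c\|_{L^\infty(K)}\,\Delta x_K$ times the nodal increments via the Lipschitz bound $\|v - v(\il y)\|_{L^\infty(K)} \le \Delta x_K\,|\nabla v|_K \le \Delta x_K \sum_{l\neq\ell}(v(y_i^l)-v(\il y))|\nabla\phi_i^l|_K$. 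This is where the $\Delta x_K\|\biao c\|_{L^\infty(K)}$ term in \eqref{eq:artdiff} comes from, and it is the only place the sign condition on $v$ is genuinely used. Everything else is the routine acute-mesh discrete-maximum-principle computation; the proof for $\siao I$ requires no new idea.
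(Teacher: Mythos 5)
Your proposal is correct, and it is in substance the same proof the paper relies on: the paper simply cites the acute-mesh discrete-maximum-principle computation of \cite[Section 8]{max_SIAM} for interior nodes (noting that at boundary nodes $\siao E$, $\siao I$ vanish, which you also observe), whereas you reconstruct that computation in full. Your decomposition — pass to nodal increments via $\sum_l \nabla\phi_i^l|_K = 0$, apply strict acuteness \eqref{eq:strict_acuteness} to the stiffness term, absorb the advection term through $|\nabla v|_K \le \sum_{l\neq\ell}(v(y_i^l)-v(\il y))|\nabla\phi_i^l|_K$, and split the reaction term as $v = v(\il y) + (v - v(\il y))$ so that the non-positive minimum kills the $v(\il y)$-piece while the Lipschitz bound feeds the remainder into \eqref{eq:artdiff} — is exactly the structure that makes the referenced argument go through, and you correctly identify the reaction term as the one place where $v(\il y)\le 0$ is genuinely needed. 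Two cosmetic remarks: the line ``$\|v\|_{L^\infty(K)} \le \|v\|_{L^\infty(K)}$'' is a placeholder tautology that you immediately correct in the following sentence, and the acuteness bound should be transferred from $\il\phi$ to $\hil\phi$ by dividing \eqref{eq:strict_acuteness} by $\|\il\phi\|_{L^1(\O)}>0$, which you use implicitly; neither affects correctness.
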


\begin{proof}
The argument from \cite[Section 8]{max_SIAM} for the Dirichlet problem carries over unchanged for local minima at interior nodes of $v$ from Definition \ref{def:lmp_wdmp}. At boundary nodes the LMP is trivially satisfied as $\siao E$ and $\siao I$ vanish there.
\end{proof}

\noindent We now turn to the monotonicity of the discrete boundary operators.

\begin{lemma}
The operators $\siar E$ and $\siar I$ satisfy the LMP for all $\a$.
\end{lemma}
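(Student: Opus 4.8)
The plan is to verify the LMP directly from the definitions \eqref{eq:discreteopbound} and \eqref{eq:discreteopDir}, exploiting that the relevant rows of $\siar E$ and $\siar I$ involve only a lower Dini derivative in a tangent-cone direction plus a non-negative zeroth-order term, and that the sign conditions make this sign-definite. Let $v \in V_i$ have a local non-positive minimum at the node $\il y$, so $v(\il y) \le 0$ and $v(\il y) \le v(z)$ for all $z$ in a neighbourhood of $\il y$.

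\textbf{Dirichlet nodes ($\ell > N_i$).} Here $(\siar E v)_\ell = 0 \le 0$ trivially, and $(\siar I v)_\ell = v(\il y) \le 0$ because the minimum is non-positive. So the LMP holds at these rows.

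\textbf{Robin nodes ($N_i^\Omega < \ell \le N_i$).} Consider $(\siar I v)_\ell = \partial_{-\bbian b(\il y)} v(\il y) + \bbian c(\il y)\, v(\il y)$; the argument for $\siar E$ is identical with bars in place of double bars, using that $\bian c \ge 0$ as well (recall on $\pO_R$ we have $\bian b = 0$, $\bian c = 0$, so the explicit row vanishes there, and on $\pO_t$ the coefficients are again of the same admissible form). First, $\bbian c(\il y)\, v(\il y) \le 0$ since $\bbian c \ge 0$ (positive on $\pO_R$, non-negative on $\pO_t$ by the sign conditions on $c_{\pO}^\a$ and the construction) and $v(\il y) \le 0$. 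Second, because $\bbian b(\il y) \in K(\il y)$ by the cone condition \eqref{eq:conecond}, the point $\il y + \lambda\, \bbian b(\il y)$ lies in $\oO$ for all small $\lambda > 0$; moreover, as noted in the Remark, for $\lambda$ small enough this point lies in the closed star of $\il y$, hence in the neighbourhood where $v(\il y)$ is the minimum. Therefore $v(\il y) - v(\il y + \lambda\, \bbian b(\il y)) \le 0$, and dividing by $\lambda > 0$ and passing to the limit (which exists and is attained exactly, $v$ being affine on the simplex containing the segment) gives $\partial_{-\bbian b(\il y)} v(\il y) \le 0$. If $\bbian b(\il y) = 0$ the Dini derivative is simply $0$. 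In all cases the two summands are $\le 0$, so $(\siar I v)_\ell \le 0$, and likewise $(\siar E v)_\ell \le 0$.

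Combining the three cases gives $(\siar E v)_\ell \le 0$ and $(\siar I v)_\ell \le 0$ for every $\ell$, which is the LMP. The only subtlety — and the one point that needs care rather than being purely formal — is the geometric claim that the evaluation point $\il y + \lambda\, \bbian b(\il y)$ remains within the neighbourhood realising the local minimum of $v$: this is exactly where the tangent-cone condition \eqref{eq:conecond} is used (guaranteeing the segment stays in $\oO$) together with the shape-regularity/mesh-subordination observation from the Remark (guaranteeing the segment stays in a single element adjacent to $\il y$, hence well inside any fixed neighbourhood of $\il y$ once $\lambda$ is small). I do not anticipate any genuine obstacle beyond stating this carefully.
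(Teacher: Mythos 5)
Your proof is correct and follows the same line of reasoning as the paper: at a local non-positive minimum the lower Dini derivative is non-positive and the zeroth-order term $\bian c(\il y)\,v(\il y)$ (resp.\ $\bbian c(\il y)\,v(\il y)$) is non-positive by the sign conditions. You spell out the interior and Dirichlet cases and the role of the tangent-cone condition \eqref{eq:conecond} more explicitly than the paper, which simply notes $\partial_{-\bian b(\il y)} w(\il y) \le 0$ and $\bian c(\il y)\,w(\il y) \le 0$ and declares the argument for $\siar I$ analogous, but the underlying idea is identical.
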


\begin{proof}
Let $w \in V_i$ have a local non-positive minimum at a node $\il y \in \pO_t \cup \pO_R$. Then we find for the lower Dini derivative $\partial_{-\bian  b(\il y)} w(\il y) \le 0$. Also $\bian  c(\il y) \, w(\il y) \le 0$ because $c(\il y) \ge 0$. Hence $\siar E$ admits the LMP. The argument for $\siar I$ is analogous. 
\end{proof} 

\subsection{Monotonicity properties of the $\siwk E$, $\sihwk E$, $\siwk I$ and $\sihwk I$}

Having examined the basic building blocks of the numerical scheme in the previous two subsections, we can now analyse the monotonicity properties of the derived operators $\sihwk E$ and $\sihwk I$ as they appear in formulation \eqref{eq:alnumsol} of the scheme. We summarise the assumptions made so far in the selection of the artificial diffusion coefficients.

\begin{assumption} \label{ass:monotonicity}
Suppose that $\mathcal{T}_i$ is strictly acute and that \eqref{eq:artdiff} and \eqref{eq:artdiffcoef} hold.
\end{assumption}

First we examine the explicit terms.

\begin{lemma}\label{lem:Emonotonicity}
Consider a fixed $w : S_i \times \oO \to \R$ such that $w(\ik s, \cdot) \in V_i$ for all $\ik s \in S_i$. Then the operators $v \mapsto \siwk E v$ satisfy the LMP and their matrix has non-positive off-diagonal entries. For $h_i$ small enough, $\sihwk E$ is monotone, i.e.~all entries of the matrix representation are non-positive.
\end{lemma}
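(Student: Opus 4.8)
The plan is to assemble the three claims—the LMP for $v \mapsto \siwk E v$, the non-positivity of the off-diagonal entries of $\siwk E$, and the full monotonicity of $\sihwk E$ for small $h_i$—by reducing each to properties already established for the building blocks $\siao E$ and $\siar E$. First I would recall that, by construction, each row $\ell$ of $\siwk E$ coincides with the corresponding row of $\mathsf{E}_i^{\hat\a}$ for some control $\hat\a = \hat\a(\ell)$ depending on $w$, and that $\mathsf{E}_i^{\hat\a} = \siao E + \siar E$ with the convention that the domain part $\siao E$ vanishes on boundary nodes $\ell > N_i^\Omega$ and the boundary part $\siar E$ vanishes on interior nodes. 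Hence each individual row is a row of an operator to which the two preceding lemmas (LMP for $\siao E$, $\siao I$; LMP for $\siar E$, $\siar I$) apply.

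For the \textbf{LMP}: suppose $v \in V_i$ has a local non-positive minimum at $\il y$. If $\ell \le N_i^\Omega$, then $(\siwk E v)_\ell = (\mathsf{E}_i^{\hat\a} v)_\ell = (\siao E v)_\ell \le 0$ by the first lemma of Section~\ref{subsec:LMPbound}; if $\ell > N_i^\Omega$, then $(\siwk E v)_\ell = (\siar E v)_\ell \le 0$ by the second lemma. Since these are the only cases, the LMP follows. For the \textbf{off-diagonal sign}: for a fixed $\a$, the off-diagonal entry $(\mathsf{E}_i^{\a})_{\ell s}$ with $\ell \ne s$ equals, for interior $\ell$, $\biao a(\il y)\langle \nabla \phi_i^s, \nabla \hil\phi\rangle + \langle -\biao b\cdot\nabla\phi_i^s + \biao c\,\phi_i^s, \hil\phi\rangle$; strict acuteness \eqref{eq:strict_acuteness} makes the stiffness term $\le -\biao a(\il y)\sin(\theta)|\nabla\phi_i^s|_K|\nabla\hil\phi|_K\,\mathrm{vol}(K)$ on each shared element, and the artificial-diffusion inequality \eqref{eq:artdiff} together with $\biao a(\il y) \ge \bialo\nu$ from \eqref{eq:artdiffcoef} ensures this dominates the lower-order terms $\langle -\biao b\cdot\nabla\phi_i^s + \biao c\,\phi_i^s, \hil\phi\rangle$ in absolute value, so the sum over the (few) elements containing both $\il y$ and $y_i^s$ is $\le 0$. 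For boundary $\ell > N_i^\Omega$, the off-diagonal entry of $\siar E$ is $\partial_{-\bian b(\il y)}\phi_i^s(\il y)$, and since $\bian b(\il y) \in K(\il y)$ and $\phi_i^s \ge 0$ with $\phi_i^s(\il y) = 0$, the lower Dini difference quotient $(\phi_i^s(\il y) - \phi_i^s(\il y + \lambda\bian b(\il y)))/\lambda = -\phi_i^s(\il y+\lambda\bian b(\il y))/\lambda \le 0$. As $\siwk E$ picks out one such row per $\ell$, its off-diagonal entries are $\le 0$. Finally, $\sihwk E$ has $\ell$th row equal to that of $h_i\siwk E - \Id$ on $\ell \le N_i^t$ and the zero row on $\ell > N_i^t$ (recalling $\pO_R$ is fully implicit); the diagonal entry becomes $h_i(\siwk E)_{\ell\ell} - 1$, which is negative once $h_i$ is small enough that $h_i(\siwk E)_{\ell\ell} < 1$—here one uses that the diagonal entries $(\siwk E)_{\ell\ell}$ are uniformly bounded above, which follows from boundedness of $\biao a$, $\biao b$, $\biao c$, the artificial diffusion coefficients, and the shape-regularity/inverse estimates on the mesh. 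The off-diagonal entries of $\sihwk E$ equal $h_i$ times those of $\siwk E$, hence remain $\le 0$, and the zero rows are trivially non-positive; thus $\sihwk E$ is monotone.

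The \textbf{main obstacle} is the off-diagonal estimate for the interior rows: one must verify carefully that \eqref{eq:artdiff} with the choice \eqref{eq:artdiffcoef} genuinely forces the artificial-diffusion contribution to beat the convection and reaction contributions \emph{on each element $K$ sharing the edge} $\il y\,y_i^s$, summing correctly over those elements, and that the bound $|\nabla\hil\phi|_K$ appearing in \eqref{eq:artdiff} matches the $|\nabla\hil\phi|_K$ produced by strict acuteness \eqref{eq:strict_acuteness}; this is essentially the computation of \cite[Section 8]{max_SIAM}, which I would cite rather than reproduce. The secondary technical point is making the uniform upper bound on the diagonal entries explicit enough to pin down the smallness threshold for $h_i$; this is routine given shape-regularity and the boundedness statements in Assumption~\ref{ass:consistency}, so I would state it without grinding through the constants.
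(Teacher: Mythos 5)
Your proof is correct, but you take a noticeably longer route for the off-diagonal sign than the paper does, and the comparison is instructive.

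The paper observes that once the LMP is in hand, the non-positivity of the off-diagonal entries comes \emph{for free}: for $j \neq \ell$ the hat function $\phi_i^j$ is non-negative on $\oO$ and vanishes at $\il y$, so it attains a non-positive local minimum at $\il y$, and the LMP immediately gives $(\siwk E \phi_i^j)_\ell \le 0$, which is precisely the $(\ell,j)$ entry. You instead re-derive the sign by going back to the formula for $(\mathsf{E}_i^{\a})_{\ell s}$, invoking strict acuteness and the artificial-diffusion bound \eqref{eq:artdiff}, and deferring the hard element-by-element accounting to \cite[Section 8]{max_SIAM}. This works, but it is essentially a re-proof of the LMP specialised to the single test function $v = \phi_i^s$, so you are duplicating the effort already invested in the preceding lemmas. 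On the boundary rows you do the same thing manually (Dini quotient of $\phi_i^s$ is $\le 0$ since $\phi_i^s \ge 0$ and $\phi_i^s(\il y)=0$), which is again a special case of the LMP of $\siar E$. The paper's argument is shorter, avoids re-touching \eqref{eq:artdiff} and \eqref{eq:artdiffcoef}, and makes clear that the off-diagonal sign is a structural consequence of the LMP rather than of the particular choice of stabilisation. Your discussion of the LMP itself (casing on interior vs.~boundary index) and of the small-$h_i$ monotonicity of $\sihwk E$ (zero rows on $\pO_R \cup \pO_D$, diagonal $h_i(\siwk E)_{\ell\ell}-1 < 0$, off-diagonals scaled by $h_i$) matches the paper and is sound.

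One small remark: when you argue that one ``must verify carefully that \eqref{eq:artdiff} with \eqref{eq:artdiffcoef} genuinely forces the artificial-diffusion contribution to beat the convection and reaction contributions,'' you are in fact flagging the precise bookkeeping that the paper's argument sidesteps entirely—this is a good reason to prefer the LMP-based derivation of the off-diagonal sign, and worth internalising: whenever you have an LMP, the off-diagonal sign of the matrix follows by applying it to hat functions, with no further estimates.
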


\begin{proof}
For any $i$ and $\a$, $\sia E = \siao E + \siar E$ satisfies the LMP because its summands do. Let us consider a $v \in V_i$ that has a local non-positive minimum at a node $\il y$. There is an $\a \in A$ such that $(\siwk E v)_\ell = (\sia E v)_\ell$. We know $ (\sia E v)_\ell \le 0$ and therefore that $v \mapsto \siwk E v$ satisfies the LMP.

For $j \neq \ell$ the hat function $\phi_i^j$ attains a non-positive minimum at $\il y$. Thus, by the LMP, we have that $(\siwk E \phi_i^j)_\ell \leq 0$. Hence all the off-diagonal entries of $\siwk E$ are non-positive. 

Owing to Assumption 1, the discretization on $\pO_R$ is fully implicit. Thus the rows of $\siwk E$ belonging to the discretization on $\pO_R$ contain only zeros. Similarly, the rows linked to $\pO_D$ vanish, see \eqref{eq:discreteopDir}. All other rows include a term arising from the time derivative; their structure is $(h_i \si E^{k,v_i} - \Id)$. Therefore, if $h_i$ is sufficiently small then $\sihwk E$ is monotone. 
\end{proof}

Now we turn to the implicit terms.

\begin{lemma}\label{lem:Imonotonicity}
Consider a fixed $w : S_i \times \oO \to \R$ such that $w(\ik s, \cdot) \in V_i$ for all $\ik s \in S_i$.  Then the operators $v \mapsto \siwk I v$ satisfy the LMP. Moreover, the $ v \mapsto \sihwk I \,v$ satisfy the wDMP and their matrix representation restricted to $V_i$ are strictly diagonally dominant $M$-matrices.
\end{lemma}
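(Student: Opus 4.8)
The plan is to mirror the structure of the proof of Lemma \ref{lem:Emonotonicity}, handling first the LMP for the row operators $\siwk I$, then the monotonicity-type properties of the combined operator $\sihwk I$. For the LMP: fix $v \in V_i$ with a local non-positive minimum at the node $\il y$; by construction the $\ell$th row of $\siwk I$ agrees with the $\ell$th row of $\sia I = \siao I + \siar I$ for some $\a \in A$ (the maximiser selected in the definition of the row-wise operators), and since we showed in Section \ref{subsec:LMPbound} that both $\siao I$ and $\siar I$ satisfy the LMP, so does their sum, hence $(\siwk I v)_\ell = (\sia I v)_\ell \le 0$. As in the explicit case, applying the LMP to the hat functions $\phi_i^j$ for $j \neq \ell$ (which attain a non-positive minimum at $\il y$) gives that the off-diagonal entries of $\siwk I$ are non-positive.

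Next I would establish the diagonal dominance and $M$-matrix structure of $\sihwk I$ restricted to $V_i$ (i.e., the $N_i \times N_i$ block corresponding to non-Dirichlet nodes). The $\ell$th row of $\sihwk I$ is $(h_i \si I^{k,v_i} + \Id)_\ell$ for $\ell \le N_i^t$ and $(h_i \si I^{k,v_i})_\ell$ for $N_i^t < \ell \le N_i$. In either case the off-diagonal entries are $h_i$ times the (non-positive) off-diagonal entries of $\siwk I$, hence non-positive. For strict diagonal dominance one needs the diagonal entry to strictly exceed the sum of the absolute values of the off-diagonal entries in the row. Testing $\siwk I$ against the constant function $\mathbf{1} = \sum_j \phi_i^j$ gives the row sum of $\siwk I$; for interior rows this equals $\bbiao c(\il y) \ge 0$ (by the sign condition, possibly zero if $c_{\Omega}^\a = 0$), for $\pO_t$ rows it picks up $\bbian c(\il y) \ge 0$ from the zeroth-order boundary term (the Dini derivative of a constant vanishes), and crucially for $\pO_R$ rows the row sum is $\bbian c(\il y) > 0$ by the positivity assumption on $\bbian c$ on $\pO_R$ built into Assumption \ref{ass:consistency}. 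Combining: for $\ell \le N_i^t$ the $\Id$ contributes $1$ to the diagonal, so the excess of diagonal over off-diagonal sum is $1 + h_i \bbiao c(\il y) \ge 1 > 0$; for $N_i^t < \ell \le N_i$ the excess is $h_i \bbian c(\il y) > 0$. Thus every row of $\sihwk I|_{V_i}$ is strictly diagonally dominant, and together with non-positive off-diagonals and positive diagonals this makes it a (strictly diagonally dominant) $M$-matrix.

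Finally I would deduce the wDMP for $v \mapsto \sihwk I v$ from the $M$-matrix/diagonal-dominance structure, using the discrete maximum principle for strictly diagonally dominant matrices with the sign pattern above. Concretely, suppose $(\sihwk I v)_\ell \ge 0$ for all $\ell \in \{1,\dots,N_i\}$; let $\ell_0$ be an index in $\O \cup \pO_t \cup \pO_R$ where $v$ attains its minimum over that set, and suppose for contradiction this minimum is negative and strictly below $\min_{\pO_D} v$. Writing out $(\sihwk I v)_{\ell_0} \ge 0$, moving the non-positive off-diagonal contributions (which involve $v$ at other nodes, each $\ge v_{\ell_0}$) to the other side, and using strict diagonal dominance together with the sign of the relevant zeroth-order/identity coefficient yields $v_{\ell_0} \ge$ a convex-combination-type bound forcing $v_{\ell_0} \ge \min\{\min_{\pO_D} v, 0\}$, a contradiction; this is exactly \eqref{eq:wDMP}. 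The main obstacle is the bookkeeping at $\pO_R$: there the identity term is absent (no time derivative), so strict diagonal dominance and hence the wDMP hinge entirely on the strict positivity $\bbian c > 0$ imposed there — I would make sure to invoke that assumption explicitly and to check that the constant-function test correctly reproduces the row sums including the vanishing of the Dini derivative on constants, since this is the one place where the argument genuinely differs from the standard interior-node case and from the explicit operator of Lemma \ref{lem:Emonotonicity}.
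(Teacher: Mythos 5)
Your proof is correct and follows essentially the same path as the paper's: LMP for the row-wise operators via the constituent lemmas, non-positivity of off-diagonals from testing hat functions, row-by-row strict diagonal dominance (interior and $\pO_t$ from the identity contribution, $\pO_R$ from $\bbian c > 0$, $\pO_D$ as identity rows), $M$-matrix structure, and the wDMP from strict dominance plus the sign pattern at a negative global minimum. The only cosmetic difference is that the paper obtains weak diagonal dominance of $\siwk I$ by applying the LMP to $v \equiv -1$ rather than by your direct row-sum computation against $\mathbf{1}$ — these are the same calculation; one small slip in your sketch is that for $\pO_t$ nodes ($N_i^{\O} < \ell \le N_i^t$) the zeroth-order coefficient in the row sum is $\bbian c$, not $\bbiao c$, though this does not affect the sign or the conclusion.
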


\begin{proof}
Analogously to the proof of Lemma \ref{lem:Emonotonicity}, the $v \mapsto \siwk I v$ satisfy the LMP and their off-diagonal entries are non-positive.

Before showing the wDMP we verify strict diagonal dominance. By construction, $v \equiv -1$ attains a non-positive local minimum at each node. Since $\siwk I$ satisfies the LMP property, we have 
\begin{align} \label{eq:boundweakdom}
0 \geq \left(\siwk I v\right)_{\ell}=-\left(\siwk I \right)_{\ell \ell}-\sum_{j \neq \ell} \left(\siwk I \right)_{\ell j}.
\end{align}
As the off-diagonal entries of $\siwk I$ are non-positive, we conclude the weak diagonal dominance of the $\siwk I$:
\[
\left(\siwk I \right)_{\ell \ell} - \sum_{j\neq \ell} \abs{\left(\siwk I \right)_{\ell j}} \geq 0.
\]
The rows of $\sihwk I$ which discretise on $\Omega$ and $\pO_t$ are equal to the respective rows of the strictly diagonally dominant matrix $h_i \siwk I + \Id$.

By Assumption \ref{ass:consistency} we have $\bbian c > 0$ on $\pO_R$. Then
\begin{align} \label{eq:boundstrictdom}
0 > \left(\siwk I v\right)_{\ell} = \left(\sihwk I v\right)_{\ell} \quad \forall \, \il y \in \pO_R.
\end{align}
Using the same argument as above, but noting the strict inequality of \eqref{eq:boundstrictdom} compared to \eqref{eq:boundweakdom}, we conclude the strict diagonal dominance for rows linked to $\pO_R$. On $\pO_D$ the rows resemble an identity matrix, giving also strict diagonal dominance. It follows that the $\sihwk I$ are invertible $M$-matrices because \cite[Chapter 6, Theorem 2.3, $(M_{35})$]{BER} applies as $\sihwk I$ is a $Z$-matrix.

Finally, consider a $v \in V_i$ with
$
\min_{\O \cup \pO_t \cup \pO_t} \!\! v < \min \{ \min_{\pO_D} v, 0 \}.
$
Let $\il y$ be a non-Dirichlet node, where the negative, global minimum of $v$ is attained. Since $\sihwk I$ is a strictly diagonal dominant $M$-matrix it follows that $(\sihwk I v)_\ell < 0$. Hence $\sihwk I$ admits the wDMP.
\end{proof}

\subsection{Scaling of the artificial diffusion coefficients} \label{sec:scaling}
In order to achieve convergence of the numerical scheme we expect the artificial diffusion coefficients $\bialo \nu$, $\bbialo \nu$ to vanish in the limit $i \to \infty$. 

Suppose that \eqref{eq:strict_acuteness} holds uniformly for some $\theta$. In this subsection we suppose $\bialo \nu$ are chosen quasi-optimally with regard to \eqref{eq:artdiff}, meaning
\begin{align} \label{eq:Oartdiffcoeff}
\bialo \nu & \lesssim \sup \Bigl\{\frac{|  \biao b |_K \, +  \Delta x_K \|  \biao c \|_{L^\infty(K)}}{ \sin(\theta) \, | \nabla \hil \phi |_K \, {\rm vol}(K)} \; \Big| \; K\subset \textrm{supp}\; \il \phi \Bigr\}\\
\bbialo \nu & \lesssim \sup \Bigl\{\frac{|  \bbiao b |_K \, +  \Delta x_K \|  \bbiao c \|_{L^\infty(K)}}{ \sin(\theta) \, | \nabla \hil \phi |_K \, {\rm vol}(K)} \; \Big| \; K\subset \textrm{supp}\; \il \phi \Bigr\}.
\end{align}
Generally, in implementations of the algorithm quasi-optimally is more easily achieved than optimality. Because of shape-regularity of the domain one has $| \nabla \hil \phi |_K \, {\rm vol}(K) \gtrsim \frac{1}{\Delta x_K}$. We conclude that quasi-optimal artificial diffusion coefficients satisfy
\begin{align} \label{eq:artdiffscaling}
{\mathsf O}(\bialo \nu) = {\mathsf O}(\bbialo \nu) = \Delta x_K.
\end{align}

We now turn our attention to the time step restrictions imposed through the quasi-optimality \eqref{eq:Oartdiffcoeff}. Recall that in order for the explicit operators to be monotone we require all their entries in matrix representation to be non-positive. This is satisfied trivially for nodes on $\pO_R \cup \pO_D$ where we use a fully implicit scheme. Therefore let us consider non-positivity of the diagonal terms of $h_i \sia E - \Id$ on the complement $\O \cup \pO_t$.  For $\il y \in \O$ this translates into the condition
\begin{align*}
1 \ge \, & h_i \, \bigl( \biao  a(\il y) \langle \nabla \il \phi, \nabla \hil \phi \rangle + \langle - \biao  b \cdot \nabla \il \phi + \biao  c \, \il \phi, \hil \phi \rangle \bigr)
\end{align*}
and for $\il y \in \pO_t$
\[
1 \ge \, h_i \left(\partial_{-\bian  b(\il y)} \il \phi(\il y) + \bian  c(\il y) \right).
\]
Because
\begin{align*}
\langle \nabla \il \phi, \nabla \hil \phi \rangle & = {\mathsf O}\bigl((\Delta x_K)^{-2} \bigr), \\
\langle \nabla \il \phi, \hil \phi \rangle & = {\mathsf O}\bigl((\Delta x_K)^{-1} \bigr),\\
\langle \il \phi, \hil \phi \rangle & = {\mathsf O}\bigl( 1 \bigr), \\
\partial_{-\bian  b(\il y)} \il \phi(\il y) & = {\mathsf O}\bigl((\Delta x_K)^{-1} \bigr),
\end{align*}
we find $h_i = {\mathsf O}((\Delta x_K)^2)$ if $\| \biao  a \|_\infty > 0$. Otherwise if $\| \biao  b \|_\infty > 0$ or $\| \bian  b \|_\infty > 0$ we have $h_i = {\mathsf O}(\Delta x_K)$ and if $\biao  a$, $\bian b$ and $\biao  b$ vanish but not $\biao c$ or $\bian c$, then $h_i = {\mathsf O}(1)$. If also $\biao c = \bian c = 0$ then there is no restriction on~$h_i$, i.e.~fully implicit discretisations are monotone for any $h_i > 0$.

\section{Existence of numerical solutions} \label{sec:exis}

The discrete non-linear problem \eqref{eq:alnumsol} can be solved by a version of Howard's algorithm discussed in \cite{Howard}. We now present its formulation in our setting.

\begin{pseudoalgorithm}
\label{alg}
Given are timestep $k \in \{0,\dots,T/h_i-1\}$, solution $v_i(s_i^{k+1},\cdot) \in V_i$ at timestep $k+1$ and an (arbitrary) choice of $\a \in A$. Find $w_0 \in V_i$ such that
\[
\siha I w_0 = \siha F - \siha E v_i(s_i^{k+1},\,\cdot).
\]
Inductively over $m \in \N$, compute $w_{m+1} \in V_i$ such that

\begin{equation}\label{algeq}
\hat{\mathsf{I}}^{w_m}_i w_{m+1} =  \hat{\mathsf{F}}^{w_m}_i -\hat{\mathsf{E}}^{w_m}_i v_i(s_i^{k+1}, \cdot).
\end{equation}
\end{pseudoalgorithm}

To show the convergence of the sequence $(w_m)_m$ to the solution of \eqref{eq:numsol} we appeal to an auxiliary problem: for some fixed control $\a \in A$ we consider the linear evolution problem associated to it. More precisely, we define $\ia v \colon S_i \to V_i$ to be such that $\ia v(T,\cdot) = v_i(T,\cdot)$, the interpolant of $v_T$, and for each $k \in \{0,\dots,T/h_i-1\}$
\begin{equation}\label{eq:controlnumsol} 
\siha I\, \ia v(\ik s, \cdot) + \siha E\, \ia v(\iko s, \cdot) - \siha F = 0.
\end{equation}
Notice that $\ia v$ is well-defined due to the invertibility of $\siha I$. 

\begin{theorem}\label{thm:discretewellposedness}
There exists a unique numerical solution $v_i \colon S_i \to V_i$ which solves \eqref{eq:numsol} and \eqref{eq:alnumsol}. Algorithm~\ref{alg}, provided with the inputs $k$, $v_i(s_i^{k+1},\cdot)$ and $\a$, generates a sequence $(w_m)_m$ which converges superlinearly to $v_i(\ik s,\cdot)$ as $m\to \infty$. Moreover, $0 \leq v_i \leq \ia v$ for all $\a \in A$.
\end{theorem}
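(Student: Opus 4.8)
The plan is to establish the three assertions of Theorem~\ref{thm:discretewellposedness} in the following order: first the convergence of Howard's algorithm (which simultaneously yields existence of a solution to the one-step problem), then uniqueness, and finally the two-sided bound $0 \le v_i \le \ia v$. The whole argument proceeds by backward induction over the timestep index $k$, the base case $k = T/h_i$ being the interpolant $v_i(T,\cdot) = $ interpolant of $v_T$, which satisfies $0 \le v_i(T,\cdot)$ because $v_T \ge 0$ and nodal interpolation preserves non-negativity, and trivially $v_i(T,\cdot) = \ia v(T,\cdot)$.

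For the inductive step, fix $k$ and assume $v_i(\iko s,\cdot)$ is already known with $0 \le v_i(\iko s,\cdot) \le \ia v(\iko s,\cdot)$ for all $\a$. Each iterate $w_{m+1}$ of Algorithm~\ref{alg} is well-defined because $\siha[w_m]{I}$ is an invertible $M$-matrix by Lemma~\ref{lem:Imonotonicity}. The key monotonicity argument is the standard one for Howard's algorithm: one shows $w_{m+1} \le w_m$ for $m \ge 1$ (and that the sequence is bounded below), by writing
\[
\sih[w_m]{I} w_{m+1} = \sih[w_m]{F} - \sih[w_m]{E} v_i(\iko s,\cdot) \le \sih[w_{m-1}]{F} - \sih[w_{m-1}]{E} v_i(\iko s,\cdot) = \sih[w_{m-1}]{I} w_m \le \sih[w_m]{I} w_m,
\]
where the first inequality uses that the $w_{m-1}$-rows are by definition maximisers while the $w_m$-rows are merely admissible, and the last inequality uses $\sih[w_m]{E} \le 0$ (Lemma~\ref{lem:Emonotonicity}, valid for $h_i$ small), the ordering consistency of the supremum, and $v_i(\iko s,\cdot) \ge 0$; then $(\sih[w_m]{I})^{-1} \ge 0$ gives $w_{m+1} \le w_m$. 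A lower bound follows by comparing with $\ia v(\ik s,\cdot)$ for a fixed $\a$: since $\ia v$ solves the linear problem with control $\a$ and the Howard rows are maximisers, $\sih[w_m]{I} w_{m+1} \ge \siha{I} \ia v(\ik s,\cdot) + (\sih[w_m]{E} - \siha{E}) v_i(\iko s,\cdot)$; a short manipulation using $\sih[w_m]{I} \ge \siha{I}$ componentwise on the diagonal and the $M$-matrix property bounds $w_{m+1}$ below uniformly in $m$ and simultaneously will feed into the bound $v_i \le \ia v$. A monotone bounded sequence in the finite-dimensional space $V_i$ converges; its limit $w_\infty$ satisfies $\sih[w_\infty]{I} w_\infty = \sih[w_\infty]{F} - \sih[w_\infty]{E} v_i(\iko s,\cdot)$, which is precisely \eqref{eq:alnumsol}, so we set $v_i(\ik s,\cdot) := w_\infty$. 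Superlinear convergence is the known refinement for Howard's algorithm on a compact control set with the operators depending continuously on $\a$ (cf.~\cite{Howard}); I would cite that and check the hypotheses are met via Assumption~\ref{ass:consistency}.

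Uniqueness at the timestep: if $u$ and $\tilde u$ both solve \eqref{eq:alnumsol}, then for the node $\ell$ where $u - \tilde u$ attains its positive maximum (if any), selecting the maximising control $\hat\a$ for $u$ gives $\siha{I}(u - \tilde u) \le 0$ at that row by admissibility of $\hat\a$ for $\tilde u$, contradicting that $\siha{I}$ is a strictly diagonally dominant $M$-matrix (so it satisfies a discrete maximum principle forcing $(u-\tilde u)_\ell \le 0$); symmetrically the minimum is controlled, so $u = \tilde u$. The bounds $0 \le v_i(\ik s,\cdot)$ follows from the wDMP of Lemma~\ref{lem:Imonotonicity}: rearranging \eqref{eq:alnumsol} as $\sih[v_i]{I} v_i(\ik s,\cdot) = \sih[v_i]{F} - \sih[v_i]{E} v_i(\iko s,\cdot) \ge 0$ since $\sih[v_i]{F} \ge 0$ (all data $f_i^\a, g_i^\a, g_i \ge 0$ by the sign conditions), $\sih[v_i]{E} \le 0$, and $v_i(\iko s,\cdot) \ge 0$ inductively; the wDMP then yields $\min v_i(\ik s,\cdot) \ge \min\{\min_{\pO_D} v_i(\ik s,\cdot), 0\} = 0$ using $g_i \ge 0$ on $\pO_D$. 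Finally $v_i(\ik s,\cdot) \le \ia v(\ik s,\cdot)$: subtract \eqref{eq:controlnumsol} from \eqref{eq:alnumsol}, pick at each row the maximising control $\hat\a$ for $v_i$, so that $\sih[v_i]{I} v_i(\ik s,\cdot) + \sih[v_i]{E} v_i(\iko s,\cdot) - \sih[v_i]{F} = 0 \le \siha[\hat\a]{I} \ia[\alpha] v(\ik s,\cdot) + \siha[\hat\a]{E} \ia[\alpha] v(\iko s,\cdot) - \siha[\hat\a]{F}$ wait — more carefully, $\siha{I} \ia v(\ik s) = \siha{F} - \siha{E}\ia v(\iko s)$ while $\sih[v_i]{I} v_i(\ik s) \ge \siha[\hat\a]{I} v_i(\ik s) + (\sih[v_i]{F} - \siha[\hat\a]{F}) - (\sih[v_i]{E} - \siha[\hat\a]{E})v_i(\iko s)$ is not quite it; the clean route is: with $\hat\a$ the row-maximiser for $v_i$, $0 = (\sih[v_i]{I}v_i(\ik s) + \sih[v_i]{E}v_i(\iko s) - \sih[v_i]{F})_\ell = (\siha[\hat\a]{I}v_i(\ik s) + \siha[\hat\a]{E}v_i(\iko s) - \siha[\hat\a]{F})_\ell$, whereas $(\siha[\hat\a]{I}\ia v(\ik s) + \siha[\hat\a]{E}\ia v(\iko s) - \siha[\hat\a]{F})_\ell = 0$ too (this is \eqref{eq:controlnumsol} for that $\a = \hat\a$), so $\siha[\hat\a]{I}(v_i(\ik s) - \ia[\hat\a] v(\ik s))_\ell = -\siha[\hat\a]{E}(v_i(\iko s) - \ia[\hat\a] v(\iko s))_\ell \le 0$ using $\siha{E} \le 0$ and the inductive hypothesis $v_i(\iko s) \le \ia v(\iko s)$; a maximum-principle / $M$-matrix argument on $\siha[\hat\a]{I}$ over all nodes (handling the row-dependence of $\hat\a$ as in the uniqueness step) then gives $v_i(\ik s,\cdot) \le \ia[\hat\a] v(\ik s,\cdot) \le \sup_\a \ia v(\ik s,\cdot)$, and since this holds for the $\hat\a$ associated to each node the bound $v_i \le \ia v$ follows for every $\a$ by the same maximiser comparison.

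The main obstacle I anticipate is the bookkeeping around the row-dependent control $\hat\a$ in the comparison arguments (uniqueness, the bound $v_i \le \ia v$, and the lower bound in Howard's iteration): one cannot simply invert a single matrix $\siha{I}$, so each comparison must be phrased as "at the extremal node, pick the locally optimal control and exploit strict diagonal dominance of that row together with non-positivity of the off-diagonal entries," which is the discrete maximum principle in disguise. Getting the direction of every inequality right — which of the two solutions supplies the maximiser, and whether that makes the discrete operator over- or under-estimate — is where care is needed. The convergence of Howard's algorithm itself and the superlinear rate I would largely import from \cite{Howard}, verifying only that the continuity-in-$\a$ and compactness hypotheses of Assumption~\ref{ass:consistency} are in force.
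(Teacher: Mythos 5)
Your structure --- convergence of Howard's iteration at each step, then $v_i \geq 0$ and $v_i \leq \ia v$ by backward induction --- matches the paper's, and the $v_i \geq 0$ argument is correct. The paper settles existence, uniqueness and superlinear convergence in one stroke by citing \cite[Theorem 2.1]{Howard}, verifying only hypothesis (H1) (inverse positivity of each $\hat{\mathsf{I}}^{w_m}_i$, from Lemma~\ref{lem:Imonotonicity}) and (H2) (continuity of $\a \mapsto -\siha I$ and $\a \mapsto \siha E v_i(\iko s,\cdot) - \siha F$, from Assumption~\ref{ass:consistency}); your hand-built monotonicity chain and separate uniqueness paragraph are therefore not needed, and as written neither is quite right. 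In the chain, the middle inequality $\hat{\mathsf{F}}^{w_m}_i - \hat{\mathsf{E}}^{w_m}_i v_i(\iko s,\cdot) \leq \hat{\mathsf{F}}^{w_{m-1}}_i - \hat{\mathsf{E}}^{w_{m-1}}_i v_i(\iko s,\cdot)$ does not follow from the maximiser property, which concerns the combined quantity $\sia E v_i(\iko s,\cdot) + \sia I w_m - \sia F$ and not the $\mathsf{E},\mathsf{F}$ part in isolation; the standard argument keeps the three operators together to obtain $\hat{\mathsf{I}}^{w_m}_i(w_m - w_{m+1}) \geq 0$ in a single step and then applies inverse positivity. In the uniqueness paragraph there is a sign slip: the maximiser for $u$ gives $\geq 0$ at the extremal row, not $\leq 0$; one must take the maximiser for $\tilde u$. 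Since you cite \cite{Howard} anyway, these are presentation issues rather than fatal flaws.

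The genuine gap is in $v_i \leq \ia v$. Your ``clean route'' compares $v_i$ to ``$v_i^{\hat\a}$'' with $\hat\a$ the row-wise maximiser, but $\hat\a$ varies from node to node, so there is no single solution of \eqref{eq:controlnumsol} against which to compare, and the maximum-principle patch you sketch could at best produce $v_i \leq \sup_\a \ia v$, which is strictly weaker than the assertion $v_i \leq \ia v$ for \emph{every} $\a \in A$. The paper removes the row dependence entirely: fix an arbitrary $\a \in A$, \emph{not} a maximiser. Because the supremum over controls in \eqref{eq:numsol} equals the temporal difference, the fixed-$\a$ expression obeys the inequality
\[
\siha I v_i(\ik s,\cdot) \leq \siha F - \siha E v_i(\iko s,\cdot).
\]
Subtracting the equality \eqref{eq:controlnumsol} for the same $\a$ and using the non-positivity of the entries of $\siha E$ together with the inductive hypothesis $v_i(\iko s,\cdot) \leq \ia v(\iko s,\cdot)$ gives
\[
\siha I \bigl( v_i(\ik s,\cdot) - \ia v(\ik s,\cdot) \bigr) \leq \siha E \bigl( \ia v(\iko s,\cdot) - v_i(\iko s,\cdot) \bigr) \leq 0,
\]
and now $(\siha I)^{-1} \geq 0$ is a single $\a$-fixed matrix, so $v_i(\ik s,\cdot) \leq \ia v(\ik s,\cdot)$ follows directly, with $\a$ arbitrary. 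The observation you missed is that passing from the supremum to a fixed control yields an inequality, not an equality, and it is exactly that inequality --- rather than the maximiser's equality --- that allows the comparison with \eqref{eq:controlnumsol} without ever changing $\a$.
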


\begin{proof} For a fixed timestep $k$, the superlinear convergence of Algorithm \ref{alg} to the unique solution $v_i(\ik s, \cdot)$ is shown in \cite[Theorem 2.1]{Howard} under Assumptions (H1) and (H2) stated therein. Condition (H1) requires the inverse positivity of the operators $\hat{\mathsf{I}}^{w_m}_i$, which holds because according to Lemma \ref{lem:Imonotonicity} every $\hat{\mathsf{I}}^{w_m}_i$ is a non-singular $M$-matrix. Condition (H2) requires that $\a \in A \mapsto -\siha I$ and $\a \in A \mapsto \siha E\, v_i(\iko s,\cdot) - \siha F$ are continuous, which follows from Assumption \ref{ass:consistency}. Induction over timesteps $k$ gives existence and uniqueness of the solution $v_i$.

We now show that $v_i \geq 0$ on $S_i\times\oO$ by induction over $k$. Firstly, we notice that $v_i(T, \cdot) \geq 0$ because we assumed that $v_T \geq 0$ on $\oO$ and the same holds for its interpolant. Let us assume $v_i(s_i^{k+1},\cdot)\geq 0$ on $\oO$ for some $\iko s \in S_i$. Due to the LMP, all entries of $\sih {E}^{v_i}$ are non-positive and by assumption all entries of $ \sih{F}^{v_i}$ are non-negative. Therefore, using \eqref{eq:alnumsol} we have that
\begin{align*}
\sih{I}^{v_i} v_i(\ik s,\cdot) & = - \sih{E}^{v_i} v_i(\iko s,\cdot) +  \sih{F}^{v_i} \geq 0.
\end{align*}
We conclude that $v_i(s_i^k,\cdot)\geq 0$ on $\oO$ due to the inverse positivity of $ \sih{I}^{v_i}$.

We now prove the last statement, namely $v_i \leq v_i^{\a}$ for all $\a \in A$, by induction over $k$. Consider any $\a \in A$. At time $T$ both $v_i$ and $\ia v$ interpolate $v_T$ and hence are equal. Let us assume that for some $k \leq T/h_i - 1$, $v_i(s_i^{k+1},\cdot) \leq v_i^{\a}(s_i^{k+1},\cdot)$. From \eqref{eq:numsol},
\[
\siha I v_i(s_i^k,\cdot) \leq \siha F -  \siha E v_i(s_i^{k+1},\cdot).
\]
Now subtracting \eqref{eq:controlnumsol} from the above inequality, together with the monotonicity of $\siha E$, gives
\begin{align*}
\siha I \left(v_i(s_i^k,\cdot)-v_i^{\a}(s_i^k,\cdot)\right) &\leq \siha E \left( v_i^{\a}(s_i^{k+1},\cdot)-v_i(s_i^{k+1},\cdot)\right) \leq 0.
\end{align*}
Using the inverse positivity of $ \siha I$ gives us $v_i(s_i^{k},\cdot) - v_i^{\a}(s_i^{k},\cdot) \leq 0$ on $\oO$, as required.
\end{proof}

\section{Consistency} \label{sec:cons}

We will assume existence of an elliptic projection $P_i$, described in \cite{max_SIAM}, with the properties required in the following assumption.

\begin{assumption}\label{ass:ellproj}
There are linear mappings $P_i : C(H^1(\O)) \to V_i$ satisfying for all interior hat functions $\hil \phi$, $\ell \le N_i^\Omega$,
\begin{equation}\label{eq:ellprojdef}
\langle \nabla P_i w, \nabla \hil \phi \rangle = \langle \nabla w, \nabla \hil \phi \rangle.
\end{equation}
There is a constant $C\geq 0$ such that for every $ w \in C^{\infty}(\R^{d})$ and $i\in\N$,
\begin{equation}\label{ass:ellprojstab}
\norm{P_i w}_{W^{1,\infty}(\O)} \leq C \norm{w}_{W^{1,\infty}(\O)}
\quad\text{and}\quad
\lim_{i \to \infty} \norm{P_i w - w}_{W^{1,\infty}(\O)}=0.
\end{equation}
\end{assumption}

To state consistency it is convenient to abbreviate the operator of the numerical scheme as
\begin{align} \label{eq:discreteF}
F_i \, w(\ik s, \il y) := \sih I^{k,w}\, w(\ik s, \il y) +\sih E^{k,w}\, w(\iko s,\il y) - \sih F
\end{align}
for $w(\ik s, \cdot) \in V_i$. Note that while $F_i$ is the discrete operator approximating the continuous operator $F$ defined in section~\ref{sec:bvp}, the notationally similar $\sih F^\alpha$ represents the approximation of $f^\alpha$, $g^\alpha$ and $g$ as explained at the end of section~\ref{sec:num}.
\begin{theorem} \label{thm:consistency}
Let $\psi \in C^2(\R \times \R^d)$, $s_i^{k(i)} \to t \in [0,T)$ and $y_i^{\ell(i)} \to x \in \oO$ as $i\to \infty$. Here $s_i^{k(i)}$ is a time step and $y_i^{\ell(i)}$ a node of the $i$-th refinement. Then
\begin{align} \label{eq:consistency_limsup}
\limsup_{i \to \infty} F_i P_i \psi(s_i^{k(i)}, y_i^{\ell(i)}) \le F^*(t,x, \Delta \psi(t,x), \nabla \psi(t,x), \partial_t \psi(t,x), \psi(t,x))
\end{align}
and
\begin{align} \label{eq:consistency_liminf}
\liminf_{i \to \infty} F_i P_i \psi(s_i^{k(i)}, y_i^{\ell(i)}) \ge F_*(t,x, \Delta \psi(t,x), \nabla \psi(t,x), \partial_t \psi(t,x), \psi(t,x)).
\end{align}
\end{theorem}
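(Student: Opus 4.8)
The plan is to prove the two inequalities \eqref{eq:consistency_limsup} and \eqref{eq:consistency_liminf} by a case distinction according to which region the limit point $(t,x)$ lies in, since the operator $F$ (and hence $F_i$) is defined piecewise over $[0,T)\times\O$, $[0,T)\times\pO_t$, $[0,T)\times\pO_R$, $[0,T)\times\pO_D$, and $\{T\}\times\oO$. Because $(t,x)$ may be a limit of nodes $y_i^{\ell(i)}$ lying in several of these regions simultaneously (for instance $x$ on a face boundary, or $x\in\pO_t\cap\overline{\pO_R}$, or $t$ approaching a region where the time-derivative term switches on or off), the upper/lower envelopes $F^*$ and $F_*$ on the right must dominate/be dominated by the limsup/liminf taken over all admissible subsequences, and it suffices to prove the estimate along an arbitrary subsequence for which $y_i^{\ell(i)}$ stays in one fixed region. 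So I would fix such a subsequence and a region, and show the one-sided convergence of each term of $F_i P_i\psi$ to the corresponding term of $F$ evaluated at $(t,x,\Delta\psi,\nabla\psi,\partial_t\psi,\psi)$.

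The core estimates are term-by-term. For the interior/elliptic part: by Assumption~\ref{ass:ellproj}, $\langle\nabla P_i\psi(s_i^{k(i)},\cdot),\nabla\hil\phi\rangle=\langle\nabla\psi(s_i^{k(i)},\cdot),\nabla\hil\phi\rangle$ on interior hat functions, and since $\hil\phi$ is $L^1$-normalised and concentrated near $y_i^{\ell(i)}\to x$, integration by parts plus $\psi\in C^2$ gives $\langle\nabla\psi,\nabla\hil\phi\rangle\to-\Delta\psi(t,x)$; similarly $\langle b\cdot\nabla\psi,\hil\phi\rangle\to b(x)\cdot\nabla\psi(t,x)$ and $\langle c\,\psi,\hil\phi\rangle\to c(x)\psi(t,x)$, using the $W^{1,\infty}$-stability and convergence $P_i\psi\to\psi$ from \eqref{ass:ellprojstab}. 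The coefficient splittings $\biao a+\bbiao a$, $\biao b+\bbiao b$, $\biao c+\bbiao c$ converge to $a^\a,b^\a,c^\a$ uniformly in $\a$ by Assumption~\ref{ass:consistency}, so the implicit and explicit contributions recombine in the limit; crucially the artificial diffusion obeys the scaling \eqref{eq:artdiffscaling}, so $(\biao a(\il y)+\bbiao a(\il y))\to a^\a(x)$ even though each summand carries an $O(\Delta x_K)$ artificial term that is absorbed by the constraint \eqref{eq:artdiffcoef} being consistent with $a^\a$. For the boundary (Robin) part the key point is that for $w=P_i\psi\in V_i$ and a direction $b_{\pO}^{\a}(x)\in K(x)$, the lower Dini derivative $\partial_{-\bian b(\il y)}w(\il y)$ is, via \eqref{eq:dirder} and the remark, an exact difference quotient $(w(\il y)-w(\il y+\lambda\bian b(\il y)))/\lambda$ with $\lambda\to 0$ as $i\to\infty$; combined with $P_i\psi\to\psi$ in $W^{1,\infty}$ and the equicontinuity of $\bian b+\bbian b\to b_{\pO}^{\a}$, this yields $\partial_{-\bian b}w(\il y)+\partial_{-\bbian b}w(\il y)\to -b_{\pO}^{\a}(x)\cdot\nabla\psi(t,x)$ uniformly in $\a$. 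The time-derivative term: $\sih I$, $\sih E$ encode $(h_i\si I+\Id)$ and $(h_i\si E-\Id)$ on $\O\cup\pO_t$, so $\sih I^{k,w}w(\ik s,\cdot)+\sih E^{k,w}w(\iko s,\cdot)$ produces $h_i\,\si I w(\ik s)+h_i\,\si E w(\iko s)+(w(\ik s,\il y)-w(\iko s,\il y))$; dividing through (the scheme's row is already scaled by $h_i$, so $F_iw=\sih I w+\sih E w-\sih F$ has a $-d_i w$ structure after noting the $h_i$ factors) one gets $-\partial_t\psi(t,x)$ in the limit via $C^2$-regularity of $\psi$ and $h_i\to 0$, while on $\pO_R\cup\pO_D$ there is no time term, matching $F$. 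Finally, the nonlinearity: since all convergences above are uniform in $\a\in A$ and $A$ is compact with $\a\mapsto(\cdot)$ continuous, $\sup_{\a}$ commutes with the limit up to the one-sided error controlled by $\esssup$-type bounds \eqref{rbounds}; the Dirichlet and final-time rows are exact nodal interpolation so converge to $s-g(x)$ and $s-v_T(x)$ directly.

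I would assemble these as follows: (1) reduce to a fixed subsequence with nodes in one region and state the target limit; (2) handle the interior term using the elliptic projection identity and $L^1$-concentration of $\hil\phi$; (3) handle the lower-order domain terms $b^\a,c^\a,f^\a$ via equicontinuity and Assumption~\ref{ass:consistency}; (4) handle the Robin boundary terms via the exactness of the Dini difference quotient and $W^{1,\infty}$-convergence of $P_i\psi$; (5) handle the discrete time derivative via the $h_i$-scaling of $\sih I,\sih E$; (6) pass the supremum over $\a$ through using compactness of $A$ and uniformity of all the preceding estimates, obtaining the limsup $\le F^*$ and liminf $\ge F_*$ inequalities; and (7) account for the envelope on the right when $(t,x)$ straddles region boundaries, observing that each admissible region's limit is one of the arguments over which $F^*/F_*$ is taken.

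The main obstacle I anticipate is step (4) combined with step (7): ensuring the Robin directional-derivative term converges correctly even as $\lambda_i\to 0$ at a rate tied to the (possibly anisotropic, shape-regular but unstructured) mesh, and that when $x$ lies on a lower-dimensional face $F\in\mathcal{F}_{d-2}$ — where different $C^2$ extensions of $\psi|_{\oO}$ give different gradients — the cone condition \eqref{eq:conecond} genuinely makes $\partial_{-b_{\pO}^{\a}(x)}\psi(t,x)$ well-defined and equal to the discrete limit; the interplay between the piecewise continuity of $b_{\pO}^\a$ across generalised faces (discontinuities allowed) and the requirement that $\mathcal{B}_i$ be subordinate to $\Fg$ is what keeps this under control, but making the uniform-in-$\a$ estimate airtight there, together with the case where the time-derivative term switches between regions $\pO_t$ and $\pO_R$, is the delicate part.
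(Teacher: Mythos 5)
Your proposal is correct and follows essentially the same strategy as the paper's proof: term-by-term consistency (elliptic projection orthogonality and $L^1$-concentration of $\hil\phi$ for the interior part, exactness of the lower Dini difference quotient for the Robin part, standard finite differences for $-d_i$, interpolation-type convergence on $\pO_D$), uniformity in $\a$ to pass the supremum through, and a decomposition of the sequence $(\ik s,\il y)$ into subsequences lying in a fixed region $\O$, $\pO_t$, $\pO_R$ or $\pO_D$, whose respective limits are then bounded by the semi-continuous envelopes $F^*$ and $F_*$. The only noteworthy difference is that the paper delegates the interior estimate to \cite[Section 4]{max_SIAM} rather than re-deriving it, whereas you sketch the integration-by-parts argument directly, and you also flag the lower-dimensional-face and mixed-region subtleties more explicitly than the paper does.
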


\begin{proof} We prove \eqref{eq:consistency_limsup}. The result for \eqref{eq:consistency_liminf} follows analogously. For ease of notation, the dependence of $k$ and $\ell$ on $i$ is made implicit.

{\em Step 1:} Standard finite difference bounds ensure that if $\il y \in \O \cup \pO_t$ then
\begin{equation}\label{eq:timederivconv}
\lim_{i\to \infty} d_i P_i \psi(\ik s, \il y) = \p_t \psi(t,x).
\end{equation}
Otherwise, if $\il y \in \pO_R \cup \pO_D$ then
\begin{equation}\label{eq:timederivconv0}
\lim_{i\to \infty} d_i P_i \psi(\ik s, \il y) = 0.
\end{equation}

{\em Step 2:} It is shown in \cite[Section 4]{max_SIAM} that if $\il y \in \O$ then
\begin{equation}\label{eq:interiorconsistency}
\lim_{i\to \infty} \left( \sia E P_i \psi(\il s,\cdot) + \sia I P_i \psi(\ik s,\cdot)  - \sia F\right)_{\ell} = L^{\a} \psi(t,x) - f^{\a}(x),
\end{equation}
where convergence to the limit is uniform over all $\a \in A$. We remark that the orthogonality \eqref{eq:ellprojdef} is used in this step.

{\em Step 3:} Now suppose that $\il y \in \pO_D$. Then it follows from \eqref{ass:ellprojstab} that
\begin{equation}\label{eq:Dirichletconsistency}
\lim_{i\to \infty} F_i P_i \psi(\ik s,\il y) = \psi(t,x) - g(x).
\end{equation}

{\em Step 4:} Let $\il y \in \pO_t \cup \pO_R$. Just like the continuous operators the corresponding  first-order terms of the discrete Robin operators employ the lower Dini derivative, giving consistency directly. Thus with
\begin{align} \label{eq:ellconsistc}
\lim_{i\to \infty} & \left|c_{\pO}^{\a} \, P_i \psi(t,\cdot) - \bbian c \, P_i \psi(s_i^{k},\cdot) -\bian c \, P_i \psi(s_i^{k+1},\cdot) \right| = 0,
\end{align}
using Assumptions \ref{ass:consistency} and \ref{ass:ellproj}, we conclude that if $\il y \in \pO_t \cup \pO_R$ then
\begin{equation}\label{eq:boundaryconsistency}
\lim_{i\to \infty} \left( \sia E P_i \psi(\il s,\cdot) + \sia I P_i \psi(\ik s,\cdot)  - \sia F\right)_{\ell} = L_{\pO}^{\a} \psi(t,x) - g^{\a}(x).
\end{equation}

{\em Step 5:} Consider the sequence $\{ (\ik s, \il y) \}_i$ as specified in the statement of the theorem, in particular with $\il y \in \oO$. We decompose $\{ (\ik s, \il y) \}_i$ into the subsequences of the $(\ik s, \il y)$ where $\il y$ belongs to $\O$, $\pO_t$, $\pO_R$ and $\pO_D$, respectively. Then the conclusions of Steps 1 to 4 above may be applied to the individual subsequences.
\end{proof}

\section{Stability} \label{sec:stab}

In this section we present a lemma which ensures $L^\infty$ stability of the numerical scheme \eqref{eq:alnumsol}. The stability statement goes back to the boundedness of a supersolution of the continuous linear problem for a fixed $\alpha$. Let
\begin{align*} \label{def:Fa}
F^\alpha (t, x,q,p,r,s) = & \left\{ 
\begin{array}{rllll}
- r + L^{\a}(x,q,p,s) - f^{\a}(x) &\text{if } (t,x) \in [0,T)\times\oO,\\
- r + L^{\a}_{\pO}(x,p,s) - g^{\a}(x) &\text{if } (t,x) \in [0,T)\times\pO_t, \\
\phantom{- r + \;} L^{\a}_{\pO}(x,p,s) - g^{\a}(x) &\text{if } (t,x) \in [0,T)\times\pO_{R}, \\
s-g(x) \;\;\; &\text{if } (t,x) \in [0,T)\times\pO_{D}, \\
s-v_T(x) \; &\text{if } (t,x) \in \{T\}\times\oO.
\end{array}
\right.
\end{align*}

\begin{assumption} \label{ass:stab_existence}
There exists an $\a \in A$ and a $w(t,x) \in C^2(\R \times \R^d)$ which is a strict supersolution of the associated linear problem. More precisely, there is an $\eps > 0$ such that
\[
F^\alpha(t, x,\Delta w(t,x), \nabla w(t,x), \partial_t w(t,x), w(t,x)) \ge \eps
\]
on $[0,T] \times \oO$.
\end{assumption}

The assumption is essentially fulfilled if the linear equation
\begin{align} \label{eq:lin_prob_eps}
F^\alpha(t, x,\Delta \psi(t,x), \nabla \psi(t,x), \partial_t \psi(t,x), \psi(t,x))= 2 \eps
\end{align}
is a well-posed problem in a suitable sense. For example, $\psi$ may be a weak solution of \eqref{eq:lin_prob_eps} as in \cite[Chapter 1]{Oleinik} which admits a bounded extension to $\R \times \R^d$. In such cases one may pass to a strict supersolution in $C^2(\R \times \R^d)$ through mollification.

\begin{lemma}\label{lem:cons_err_convergence}
Let $\a$ be as in Assumption \ref{ass:stab_existence}, $s_i^{k(i)} \to t \in [0,T)$ and $y_i^{\ell(i)} \to x \in \oO$ as $i\to \infty$. Here $s_i^{k(i)}$ is a time step and $y_i^{\ell(i)}$ a node of the $i$-th refinement. Then
\begin{align*} 
\liminf_{i \to \infty} \left[ \siha I P_i w(\ik s, \il y) +\siha E P_i w(\iko s, \il y) - (\siha F)_\ell \right] \ge \eps.
\end{align*}
\end{lemma}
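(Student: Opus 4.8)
The plan is to mirror the consistency proof of Theorem~\ref{thm:consistency}, but for the single fixed control $\a$ from Assumption~\ref{ass:stab_existence}, and then invoke the lower bound $\eps$ that this assumption supplies. First I would observe that the quantity inside the $\liminf$ is precisely $F_i^\a P_i w(\ik s, \il y)$, i.e.~the discretisation of the linear operator $F^\a$ with the frozen control, built row-wise from $\siha I$, $\siha E$ and $\siha F$ exactly as $F_i$ is built from the sup-operators. Since $w \in C^2(\R \times \R^d)$, the consistency argument of Theorem~\ref{thm:consistency} applies verbatim to $F^\a$ in place of $F$ — indeed it is easier, as there is no supremum to pass to the limit and no need to track uniformity over $\a \in A$. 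Concretely, Steps 1--4 of that proof give, along the relevant subsequence determined by which piece of $\overline\Omega$ the nodes $\il y$ eventually lie in, that
\[
\lim_{i \to \infty} \left[ \siha I P_i w(\ik s, \il y) + \siha E P_i w(\iko s, \il y) - (\siha F)_\ell \right] = F^\a\bigl(t,x,\Delta w(t,x), \nabla w(t,x), \partial_t w(t,x), w(t,x)\bigr),
\]
using \eqref{eq:ellprojdef} and \eqref{ass:ellprojstab} on $\Omega$, the lower Dini derivative consistency on $\pO_t \cup \pO_R$, and nodal interpolation on $\pO_D$. In particular the $\liminf$ is in fact a genuine limit here.

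Next I would note that although $F^\a$ is a case-defined function, it is lower semicontinuous at every point of $[0,T] \times \oO$ at which Assumption~\ref{ass:stab_existence} is evaluated: the only possible failure of continuity occurs when $(t,x)$ sits on a lower-dimensional face where the boundary-condition type changes or where the coefficients $b_\pO^\a, c_\pO^\a, g^\a \in PC(\pO)$ jump, and there the relevant inequality either already holds by the hypothesis (the assumption is posed pointwise on all of $[0,T]\times\oO$ with the strict bound $\ge \eps$) or is recovered by lower semicontinuity. More simply: the limiting value above equals $F^\a$ evaluated at $(t,x,\ldots)$ when the subsequence stays within one region, and by Assumption~\ref{ass:stab_existence} every such value is $\ge \eps$. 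Decomposing $\{(\ik s, \il y)\}_i$ into the (finitely many) subsequences according to whether $\il y$ lies in $\Omega$, $\pO_t$, $\pO_R$ or $\pO_D$, the limit along each subsequence is $\ge \eps$, hence so is the $\liminf$ of the full sequence.

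The only genuine subtlety — and the step I expect to require the most care — is matching the limit value to the correct branch of $F^\a$ when $x$ lies on the closure of several regions simultaneously, e.g.~$x \in \overline{\pO_t} \cap \overline{\pO_D}$ or $x$ in a $(d-2)$-face. Here the node $\il y \to x$ may approach through different regions for different $i$, so one must check that \emph{every} branch that can be attained in the limit produces a value $\ge \eps$; this is exactly why Assumption~\ref{ass:stab_existence} is stated as a strict inequality holding on all of $[0,T] \times \oO$ rather than just on the interior of each piece, and why the mesh is assumed subordinate to $\Fg$ so that each $\il y$ on the boundary sits unambiguously in one region. Handling the time-derivative term $d_i$ also needs the observation, as in Step~1 of Theorem~\ref{thm:consistency}, that $d_i P_i w \to \p_t w$ on $\Omega \cup \pO_t$ and $d_i P_i w \to 0$ on $\pO_R \cup \pO_D$, which is consistent with the branch structure of $F^\a$ (the $-r$ term being absent on $\pO_R \cup \pO_D$). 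Once these case distinctions are organised, the proof reduces to quoting the earlier consistency result and reading off the $\eps$-bound.
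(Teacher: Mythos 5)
Your proposal is correct and follows essentially the same route as the paper: the paper's proof simply invokes Theorem~\ref{thm:consistency} specialised to the singleton control set $A=\{\alpha\}$ and then observes that $(F^\alpha)_*\ge\eps$ because $F^\alpha\ge\eps$ pointwise on $[0,T]\times\oO$ by Assumption~\ref{ass:stab_existence}. Your subsequence decomposition and case analysis across boundary-type changes is precisely what the lower semi-continuous envelope $(F^\alpha)_*$ packages in one stroke, so the two arguments coincide up to phrasing.
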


\begin{proof}
We use Theorem \ref{thm:consistency} for a singleton control set $A = \{ \alpha \}$ to cover the linear case. The result now follows because
\begin{align*}
(F^\alpha)_*(x, \Delta w(t,x), \nabla w(t,x), \partial_t w(t,x), w(t,x)) \ge \eps,
\end{align*}
owing to Assumption \ref{ass:stab_existence}. 
\end{proof}

\begin{theorem}\label{lem:num_stability}
The numerical solutions $v_i$ are uniformly bounded in the $L^{\infty}$ norm. More precisely, there exists a finite constant $C>0$ such that
\begin{align} \label{eq:stab_bound}
\smnorm{v_i}_{L^{\infty}(S_i \times \oO)} \leq C \qquad \forall \, i \in \N.
\end{align}
\end{theorem}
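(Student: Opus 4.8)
The plan is to compare the numerical solution $v_i$ from above with a discretised strict supersolution. By Theorem~\ref{thm:discretewellposedness} we already have $0 \le v_i \le \ia v$ for the $\a$ from Assumption~\ref{ass:stab_existence}, so it suffices to bound $\ia v$ from above uniformly in $i$; the lower bound $v_i \ge 0$ is immediate. Let $w \in C^2(\R \times \R^d)$ be the strict supersolution supplied by Assumption~\ref{ass:stab_existence}, and set $W_i := P_i w$. The key input is Lemma~\ref{lem:cons_err_convergence}: for every sequence of time steps $s_i^{k(i)} \to t$ and nodes $y_i^{\ell(i)} \to x$ one has $\liminf_{i\to\infty}\bigl[\siha I W_i(\ik s,\il y) + \siha E W_i(\iko s,\il y) - (\siha F)_\ell\bigr] \ge \eps$. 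Since there are only finitely many nodes and time steps for each fixed $i$, and the limit inferior over all convergent subsequences is $\ge \eps$, a standard compactness argument shows that there is $i_0$ such that for all $i \ge i_0$, all $k$ and all $\ell$,
\begin{equation*}
\siha I W_i(\ik s,\cdot)(\il y) + \siha E W_i(\iko s,\cdot)(\il y) - (\siha F)_\ell \ge \tfrac{\eps}{2} \ge 0 .
\end{equation*}
In other words $W_i$ is a discrete supersolution of \eqref{eq:controlnumsol} for $i \ge i_0$.

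Next I would run the comparison argument between $\ia v$ and $W_i$ exactly as in the last part of the proof of Theorem~\ref{thm:discretewellposedness}, but with the roles reversed, inducting \emph{backwards} in $k$ from $T$ to $0$. At the final time, $\ia v(T,\cdot)$ is the interpolant of $v_T$ while $W_i(T,\cdot) = P_i w(T,\cdot)$; these need not coincide, but by \eqref{ass:ellprojstab} and the boundedness of $w$ and $v_T$ on $\oO$ there is a constant $M_0$, independent of $i$, with $W_i(T,\cdot) + M_0 \ge \ia v(T,\cdot)$ on $\oO$. Observe that adding a nonnegative constant $M_0$ to $W_i$ only increases the left-hand side of the discrete equation on $\O$ and $\pO_t$ (because $\bbiao c, \bbian c \ge 0$ contribute a nonnegative term and the identity part from $d_i$ cancels between timesteps), is strictly helpful on $\pO_R$ (where $\bbian c > 0$), and on $\pO_D$ replaces $g_i(\il y)$ by $g_i(\il y) + M_0 \ge g_i(\il y)$; hence $W_i + M_0$ remains a discrete supersolution. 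Now suppose inductively $W_i(s_i^{k+1},\cdot) + M_0 \ge \ia v(s_i^{k+1},\cdot)$. Subtracting the equation \eqref{eq:controlnumsol} for $\ia v$ from the supersolution inequality for $W_i + M_0$ and using that $\siha E$ is monotone (Lemma~\ref{lem:Emonotonicity}) together with the induction hypothesis gives
\begin{equation*}
\siha I\bigl( (W_i + M_0)(\ik s,\cdot) - \ia v(\ik s,\cdot) \bigr) \ge \siha E\bigl( \ia v(\iko s,\cdot) - (W_i+M_0)(\iko s,\cdot) \bigr) \ge 0 ,
\end{equation*}
and the inverse positivity of $\siha I$ (it is an $M$-matrix by Lemma~\ref{lem:Imonotonicity}) yields $W_i(\ik s,\cdot) + M_0 \ge \ia v(\ik s,\cdot)$. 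This closes the induction, so $\ia v \le W_i + M_0 \le \norm{P_i w}_{L^\infty(\O)} + M_0 \le C\norm{w}_{W^{1,\infty}(\O)} + M_0 =: C$ uniformly in $i \ge i_0$; the finitely many levels $i < i_0$ are bounded individually, giving \eqref{eq:stab_bound}.

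The main obstacle I anticipate is passing cleanly from the subsequential statement of Lemma~\ref{lem:cons_err_convergence} to a uniform-in-$(\ik s,\il y)$ inequality valid for all large $i$; this is the standard ``diagonal/compactness'' manoeuvre but must be phrased carefully because the node sets change with $i$. One argues by contradiction: if no such $i_0$ existed, one could extract $i_m \to \infty$ together with nodes $y_{i_m}^{\ell(i_m)}$ and time steps $s_{i_m}^{k(i_m)}$ at which the discrete residual stays below $\eps/2$; passing to a further subsequence along which $s_{i_m}^{k(i_m)} \to t \in [0,T]$ and $y_{i_m}^{\ell(i_m)} \to x \in \oO$ (possible by compactness of $[0,T]\times\oO$) contradicts Lemma~\ref{lem:cons_err_convergence}. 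A secondary technical point, already flagged above, is checking that the additive shift by $M_0$ preserves the discrete supersolution property on each of the regions $\O$, $\pO_t$, $\pO_R$, $\pO_D$; this is a direct inspection of the definitions in \eqref{eq:discreteopint}--\eqref{eq:discreteopDir} and the sign conditions $\bbiao c, \bbian c \ge 0$ with $\bbian c > 0$ on $\pO_R$.
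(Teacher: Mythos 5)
Your proposal is correct and follows essentially the same route as the paper: reduce to bounding $\ia v$ via $0 \le v_i \le \ia v$, turn $P_i w$ into a discrete supersolution for $i$ large using Lemma~\ref{lem:cons_err_convergence} together with the compactness-by-contradiction step, shift by a constant (harmless since $c_{\O}^{\a}, c_{\pO}^{\a} \ge 0$) to dominate at the final time, and close by backward induction via the sign of $\siha E$ and the inverse positivity of $\siha I$. The paper phrases the induction in terms of the residual $\ik e$ of the difference $\ik{\tilde v}$ rather than comparing $W_i + M_0$ to $\ia v$ directly, but this is a cosmetic reformulation of the same argument.
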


\begin{proof}
Recall the solution $v_i^{\a}$ of the linear problem \eqref{eq:controlnumsol}. We define
\begin{align*}
\ik w & :=  P_i w^{\a}(\ik s, \cdot),\\
\ik{\tilde{v}} & := \ik w - v_i^{\a}(\ik s, \cdot).
\end{align*}
It is convenient to set
\begin{align}\label{eq:stabA}
\ik e := \siha I \ik{\tilde{v}} + \siha E \iko{\tilde{v}} = \siha I \ik w + \siha E \iko w - \siha F.
\end{align}
Because of Assumptions~\ref{ass:ellproj} and \ref{ass:stab_existence}, the $w_i$ are uniformly bounded in the $L^\infty$ norm. Moreover, $0 \le v_i \le v_i^{\a}$ due to Theorem~\ref{thm:discretewellposedness}. Thus the statement of the theorem is proved once we demonstrate that the $v_i^{\a}$ are bounded from above independently of~$i$. This is equivalent to showing a lower bound for the $\ik{\tilde{v}}$. 

It follows from Lemma~\ref{lem:cons_err_convergence} that $e^k \ge 0$ for $i$ larger than some constant $M$. It is trivial that there exists a constant $C$ such that the inequality of \eqref{eq:stab_bound} holds for all $i \le M$. We therefore may assume w.l.o.g.~that $e^k \ge 0$ throughout. By possibly modifying $w$ through the addition of a positive constant we can assume that
\[
w_i^{T/h_i}  = P_i w(T, \cdot) \ge \| v_T(T, \cdot) \|_{L^\infty(\O)}
\]
while maintaining the supersolution property of $w$ because $c_{\O}^{\a}, c_{\pO}^{\a} \ge 0$. This implies $\tilde{v}_i^{T/h_i} \ge 0$, i.e.~the non-negativity at the final time.

Now suppose $\iko{\tilde{v}} \ge 0$. Then
\[
\ik{\tilde{v}} = (\siha I)^{-1} \bigl( \ik e - \siha E \iko{\tilde{v}} \bigr) \ge 0
\]
because also $(\siha I)^{-1} \ge 0$ and $- \siha E \iko{\tilde{v}} \ge 0$. Now induction in $k$ completes the proof.
\end{proof}

\section{Uniform Convergence} \label{sec:conv}
Analogously to the envelopes of functions introduced in Section \ref{sec:bvp} we define envelopes of the numerical solutions as follows
\[
\overline{v}(t,x) = \limsup_{i \to \infty} \sup_{(\ik s,\il y) \to (t,x)} v_i(\ik s,\il y), \qquad  \underline{v}(t,x) = \liminf_{i \to \infty} \inf_{(\ik s,\il y) \to (t,x)} v_i(\ik s,\il y)
\]
where limits are taken over all sequences of nodes in $[0,T] \times \oO$ which converge to $(t,x) \in [0,T] \times \oO$.
Owing to Theorem \ref{lem:num_stability}, $\overline{v}$ and $\underline{v}$ attain finite values. By construction, $\overline{v}$ is upper and $\underline{v}$ lower semi-continuous and $\underline{v} \le \overline{v}$.

\begin{theorem}
The function $\overline{v}$ is a viscosity subsolution and $\underline{v}$ is a viscosity supersolution.
\end{theorem}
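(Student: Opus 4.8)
The plan is to run the Barles--Souganidis half-relaxed limits argument, adapted to the presence of the mixed boundary conditions. I treat only the subsolution property of $\overline v$, since the supersolution property of $\underline v$ is symmetric. Fix a test function $\psi \in C^2(\R \times \R^d)$ and suppose $\overline v - \psi$ attains a local maximum at some $(t,x) \in [0,T] \times \oO$. By the usual reductions I may assume the maximum is strict and global (subtracting a quartic bump), and that $\overline v(t,x) = \psi(t,x)$ (adding a constant). What I must show is
\[
F_*\bigl(t,x,\Delta\psi(t,x),\nabla\psi(t,x),\partial_t\psi(t,x),\overline v(t,x)\bigr) \le 0.
\]

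The first step is the standard extraction of a sequence of discrete ``near-maximisers''. By definition of $\overline v$ as a half-relaxed limit of the $v_i$ on the nodes, there is a subsequence (not relabelled) and nodes $(\ik s, \il y) \to (t,x)$ with $v_i(\ik s,\il y) \to \overline v(t,x)$. I then replace these by nodes $(s_i^{k_i}, y_i^{\ell_i})$ at which $v_i - P_i\psi$ (evaluated at nodes and timesteps) attains its \emph{maximum} over the grid; using Assumption~\ref{ass:ellproj} (so that $P_i\psi \to \psi$ uniformly in $W^{1,\infty}$, hence uniformly) together with the strictness of the maximum of $\overline v - \psi$, a routine compactness argument shows $(s_i^{k_i}, y_i^{\ell_i}) \to (t,x)$ and $v_i(s_i^{k_i}, y_i^{\ell_i}) \to \overline v(t,x) = \psi(t,x)$. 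Write $\mu_i := (v_i - P_i\psi)(s_i^{k_i}, y_i^{\ell_i})$ for the maximum value; then $v_i(s,\cdot) \le P_i\psi(s,\cdot) + \mu_i$ at all nodes $(s,\cdot)$, and $\mu_i \to 0$.

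The second step is to exploit monotonicity to turn ``$v_i$ solves the scheme and lies below $P_i\psi + \mu_i$'' into ``$F_i P_i\psi \le 0$ up to vanishing error'' at $(s_i^{k_i}, y_i^{\ell_i})$. Concretely, at the maximising node the scheme \eqref{eq:alnumsol} reads, with $\hat\a$ the row-maximiser and writing $\ell=\ell_i$, $k=k_i$,
\[
\siha I v_i(\ik s,\cdot) + \siha E v_i(\iko s,\cdot) - (\siha F)_\ell = 0 \quad\text{(row $\ell$).}
\]
Since $\siha I$ restricted to $V_i$ is a strictly diagonally dominant $M$-matrix with non-positive off-diagonal entries whose row sum controls the diagonal (Lemma~\ref{lem:Imonotonicity}), and $v_i - (P_i\psi + \mu_i) \le 0$ at all nodes with equality at $\ell$, monotonicity of the $\ell$th row of $\siha I$ gives $\bigl(\siha I (v_i - P_i\psi - \mu_i)\bigr)_\ell \ge 0$, i.e. $\bigl(\siha I v_i\bigr)_\ell \ge \bigl(\siha I P_i\psi\bigr)_\ell + \mu_i(\siha I \mathbf 1)_\ell$; the analogous inequality for the explicit part uses that $\siha E$ is monotone (Lemma~\ref{lem:Emonotonicity}, valid for $h_i$ small) together with $v_i(\iko s,\cdot) \le P_i\psi(\iko s,\cdot) + \mu_i$. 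Subtracting, one obtains
\[
F_i P_i\psi(s_i^{k_i}, y_i^{\ell_i}) \le F^{\hat\a}_i P_i\psi(s_i^{k_i}, y_i^{\ell_i}) + \mu_i \, O(\|\siha I\mathbf 1\|_\infty + \|\siha E\mathbf 1\|_\infty) \le \mu_i \, C,
\]
where the constant $C$ absorbs the row sums; here one must check that $\mu_i$ times the (possibly $O(h_i^{-1})$ or worse) row sums still vanishes --- this is where the scaling analysis of Section~\ref{sec:scaling} and the choice $\mu_i = o(h_i)$ enters, or alternatively one argues more carefully using $(\siha I \mathbf 1)_\ell \ge 0$ and $(\siha E\mathbf 1)_\ell \le 0$ to drop the unfavourable sign outright. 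Passing $i \to \infty$ and invoking the consistency estimate \eqref{eq:consistency_limsup} of Theorem~\ref{thm:consistency} then yields
\[
0 \ge \limsup_{i\to\infty} F_i P_i\psi(s_i^{k_i}, y_i^{\ell_i}) \ge \liminf_{i\to\infty} F_i P_i\psi(s_i^{k_i}, y_i^{\ell_i}) \ge F_*\bigl(t,x,\Delta\psi(t,x),\nabla\psi(t,x),\partial_t\psi(t,x),\psi(t,x)\bigr),
\]
which is exactly the viscosity subsolution inequality; the supersolution property of $\underline v$ follows by the mirror-image argument using \eqref{eq:consistency_liminf} and the wDMP-type lower bounds.

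\textbf{Main obstacle.} The delicate point is the interface between the monotonicity step and the consistency step: one needs the comparison ``$v_i$ below $P_i\psi + \mu_i$ at nodes, equal at $\ell$'' to be propagated through \emph{both} the implicit operator (at time $\ik s$) and the explicit operator (at time $\iko s$), and to control the resulting error terms proportional to $\mu_i$ against the large row sums of $\siha I$, $\siha E$. On $\pO_R \cup \pO_D$ the rows have no time-derivative contribution, so their row sums are $O(h_i/\Delta x_i)$ or larger, and the naive bound $\mu_i \cdot O(h_i/\Delta x_i^2)$ need not vanish; the clean way around this is to use the one-sided sign information $(\siha I\mathbf 1)_\ell \ge 0$ (diagonal dominance) and $(\siha E\mathbf 1)_\ell \le 0$ so that the $\mu_i$-terms have a favourable sign and can simply be discarded, leaving $F_i P_i\psi \le F^{\hat\a}_i P_i\psi$ exactly. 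A second, more bookkeeping-type subtlety is the case distinction in Step~5 of Theorem~\ref{thm:consistency}: the limiting node $x$ may lie on a lower-dimensional face, and one must pass to a further subsequence along which all the $y_i^{\ell_i}$ lie in a single one of $\O, \pO_t, \pO_R, \pO_D$ so that the appropriate branch of $F_*$ is selected; since $F_*$ is the lower semicontinuous envelope, whichever branch is realised along the subsequence dominates $F_*(t,x,\cdots)$, so the inequality survives.
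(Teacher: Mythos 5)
Your overall strategy is exactly the paper's: extract a sequence of discrete near-maximisers of $v_i - P_i\psi$, push the inequality $v_i \le P_i\psi + \mu_i$ through the monotone implicit and explicit operators, and then invoke the consistency theorem. The skeleton is sound and matches the paper's proof closely.

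However, there is a genuine gap in the part you yourself flag as the ``delicate point'', and the workarounds you propose do not close it. Your worry is that the $\mu_i$-error is multiplied by row sums of $\sih I^{v_i}$, $\sih E^{v_i}$ which could be ``$O(h_i/\Delta x_i)$ or larger''. This is a misdiagnosis: the row sums of these matrices are \emph{bounded}, not large. The reason is that a row sum equals the operator applied to the constant function $\mathbf 1$, and since the gradient, the discrete Laplacian term $\langle \nabla \mathbf 1, \nabla \hil\phi\rangle$, and the lower Dini derivative of a constant all vanish, one has for interior rows $(\si I^\a \mathbf 1)_\ell = \langle \bbiao c, \hil\phi\rangle$ and for boundary rows $(\siar I^\a \mathbf 1)_\ell = \bbian c(\il y)$. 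The large $O(\Delta x^{-2})$ diagonal entry cancels against the off-diagonal entries, leaving only the zeroth-order coefficient. The paper exploits this exactly: after inserting $\mu_i$, the only new term is $\mu_i\,\pair{\bia c + \bbia c}{\hil\phi}$, which is bounded by $\gamma\abs{\mu_i}$ with $\gamma := \sup_{\alpha,i} \| \bia c + \bbia c \|_\infty < \infty$. This makes the estimate work for any $\mu_i \to 0$, of either sign, without a rate.

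Your two proposed fixes do not substitute for this observation. The assumption ``$\mu_i = o(h_i)$'' is not available: the near-maximiser construction only yields $\mu_i \to 0$ with no control on the rate. The sign argument ``discard $\mu_i(\sih I\mathbf 1)_\ell \ge 0$, $\mu_i(\sih E\mathbf 1)_\ell \le 0$'' fails because $\mu_i$ has no fixed sign: if $\mu_i < 0$, the terms enter the inequality with the unfavourable sign and cannot be discarded. A further small slip: since $F_i$ is the sup over $\alpha$, one has $F_i P_i\psi \ge F_i^{\hat\alpha} P_i\psi$ for every fixed $\hat\alpha$, not $\le$ as written; the clean route (taken by the paper) is to apply the monotonicity inequalities for every fixed $\alpha$ \emph{before} taking the sup, which also sidesteps the need to reconcile the two different row-maximisers attached to $v_i$ and to $P_i\psi$.
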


\begin{proof}
{\em Step 1 ($\overline{v}$ is a subsolution).}  To show that $\overline{v}$ is a viscosity subsolution, suppose that $w \in C^{\infty}(\R \times \R^d)$ is a test function such that $\overline{v}-w$ has a strict local maximum at $(s,y) \in (0,T)\times\oO$, with $ \overline{v}(s,y)=w(s,y)$. Note that $(s,y)$ may be on the boundary. Consider a closed neighbourhood $B := \bigl\{ (t,x) \in (0,T)\times \oO\; : \; |t-s|+|x-y| \leq \delta \bigr\}$ with $\delta>0$ such that
\[
\overline{v}(s,y)-w(s,y) > \overline{v}(t,x)-w(t,x) \quad \forall (t,x) \in B \setminus (s,y).
\]
Choose $i$ sufficiently large for $B$ to contain nodes. As in \cite{max_SIAM} we choose a sequence of nodes $\{ (s_{i(j)}^k,y_{i(j)}^{\ell}) \}_j$ which maximise $v_i(s_{i(j)}^\kappa,y_{i(j)}^\lambda) - P_i w(s_{i(j)}^\kappa,y_{i(j)}^\lambda)$ among all nodes $(s_{i(j)}^\kappa,y_{i(j)}^\lambda) \in B$ and converge to $(s,y)$.
It follows that
\begin{align} \label{eq:defmu}
v_i(\ik s,\il y)-P_i w(\ik s,\il y) \tends \overline{v}(s,y)-w(s,y)=0.
\end{align}
Moreover, because of $(\ik s,\il y) \tends (s,y)$, the neighbours of the $(\ik s,\il y)$ eventually also belong to $B$: for $i$ sufficiently large, we have $
(\ika s,\ila y) \in B $ if $\kappa \in \left\{k,k+1\right\}$ and $\ila y \in \supp \hil \phi$, in which case
\begin{align*}
v_i(\ika s,\ila y) - P_i w (\ika s,\ila y) \leq v_i(\ik s,\il y) - P_i w (\ik s,\il y) \\ \Leftrightarrow \; P_i w (\ika s,\ila y) + \mu_i \geq v_i  (\ika s,\ila y),
\end{align*}
with $\mu_i =  v_i(\ik s,\il y) - P_i w (\ik s,\il y)$, and $\mu_i \to 0$ as $i \to \infty$ because of~\eqref{eq:defmu}.

Recall that the matrices $\sia E$ have non-zero off diagonal entries $ \left(\sia E\right)_{\ell \lambda}$ only if $ \ila y \in \supp \hil \phi$ and that $v_i(s_i^{k+1},\cdot) \leq P_i w(s_i^{k+1},\cdot)+\mu_i$ on $\supp \hil \phi$. Therefore, monotonicity of $h_i \sia E - \Id$ for all $\a \in A$ implies that
\[
\left( (h_i \sia E - \Id ) \left[ P_i w (s^{k+1}_i,\cdot)+\mu_i \right] \right)_{\ell} \leq \left( (h_i \sia E-\Id) v_i(s_i^{k+1},\cdot)\right)_{\ell}.
\]
Applying the LMP and linearity of $ \sia I $ to $ P_i w (\ik s,\cdot) +\mu_i - v_i(\ik s,\cdot)$, which has a non-positive local minimum at $ \il y$, yields
\[
\left( (h_i \sia I + \Id ) \left[P_i w(\ik s,\cdot)+\mu_i \right]\right)_{\ell} \leq \left( (h_i \sia I + \Id) v_i(\ik s,\cdot)\right)_{\ell}.
\]
From the definition of the scheme, with $\gamma := \sup_{\alpha,i} \| \bia c + \bbia c \|_\infty$,
\begin{align} \nonumber
0 = \, & - d_i v_i (\ik s,\il y) + \sup_{\a \in A} \left( \sia E v_i(s_i^{k+1},\cdot)+\sia I v_i(\ik s,\cdot) - \sia F \right)_{\ell}  \\ \nonumber
\geq \, & - d_i \! \left( P_i w (\ik s,\il y) + \mu_i \right) + \sup_{\a\in A} \! \left( \sia E \! \left( P_i w (s_i^{k+1} ,\cdot) +  \mu_i \right) + \sia I \! \left( P_i w (\ik s,\cdot) + \mu_i \right) - \sia F \right)_{\ell}\\
\nonumber
= \, & - d_i P_i w(\ik s, \il y) + \sup_{\a \in A} \left[ \left( \sia E P_i w (s_i^{k+1} ,\cdot) + \sia IP_i w (\ik s,\cdot) - \sia F \right)_{\ell} + \mu_i \pair{\bia c+\bbia c}{\hil \phi}\right] \\
\geq \, & - d_i P_i w(\ik s, \il y) + \sup_{\a \in A} \left( \sia E P_i w (s_i^{k+1} ,\cdot) + \sia IP_i w (\ik s,\cdot) - \sia F \right)_{\ell} - \gamma \abs{\mu_i}\nonumber \\
= \, & F_i P_i w(\ik s, \il y) - \gamma \abs{\mu_i}. \label{eq:subsolineq1}
\end{align}
For a fixed $i$, evaluating $F_i P_i w(\ik s, \il y)$ may involve a boundary operator even if $(s,y)$ is internal and vice versa may involve the PDE operator even if $(s,y)$ belongs to the boundary. Referring to the semi-continuous envelope $F_*$, it now follows from \eqref{eq:subsolineq1}, $\lim_i \mu_i = 0$ and Theorem \ref{thm:consistency} that
\begin{align*}
0 & \geq \liminf_{i \to \infty} F_i P_i w(\ik s, \il y)\\
& \ge F_*(t, y, \Delta w(s,y), \nabla w(s,y), \partial_t w(s,y), \overline{v}(s,y)).
\end{align*}
Therefore $\overline{v}$ is a viscosity subsolution.

{\em Step 2 ($\underline{v}$ is a supersolution).} Arguments similar to those above show that $\underline{v}$ is a viscosity supersolution, where the principal change to the proof is that one considers $w \in C^{\infty}(\R \times \R^d)$ such that $\underline{v}-w$ has a strict local minimum at some $(s,y)\in(0,T)\times\O$ with $\underline{v}(s,y)=w(s,y)$. With analogous notation, the last line in \eqref{eq:subsolineq1} corresponds to
\[
0 \leq -d_i P_i w(\ik s,\il y)+\sup_{\a\in A}\left(\sia EP_i w(s_i^{k+1},\cdot)+\sia I P_i w(\ik s,\cdot)-\sia F\right)_{\ell}+ \gamma \abs{\mu_i},
\]
i.e. there is a slight asymmetry in the argument due to the last sign in \eqref{eq:subsolineq1}. Nevertheless, it is then deduced that
\[
0 \leq F^*(t, x, \Delta w(t,x), \nabla w(t,x), \partial_t w(t,x), \underline{v}(t,x)).
\]
Thus $\underline{v}$ is a viscosity supersolution.
\end{proof}

The above proof is an adaptation of the Barles-Souganidis argument \cite{barles_souganidis} to the finite element setting, in line with that in \cite{max_SIAM} but differing in the treatment of the boundary conditions.

\begin{assumption} \label{ass:comp}
Let $\overline{v}$ be a lower semi-continuous supersolution and $\underline{v}$ be an upper semi-continuous subsolution. Then $\underline{v} \le \overline{v}$.
\end{assumption}

\begin{theorem} \label{thm:uniform}
One has $\underline{v} = \overline{v} = v$, where $v$ is the unique viscosity solution with $v(T,\cdot)=v_T$.  Furthermore
\begin{align} \label{conv}
\lim_{i \to \infty} \| v_i - v \|_{L^\infty((0,T) \times \O)} = 0.
\end{align}
\end{theorem}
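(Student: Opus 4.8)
The plan is to combine the sub/supersolution property established in the preceding theorem with the comparison principle of Assumption~\ref{ass:comp} and the stability bound of Theorem~\ref{lem:num_stability}. First I would observe that, by the previous theorem, $\overline{v}$ is a viscosity subsolution and $\underline{v}$ is a viscosity supersolution of \eqref{eq:Bellmanibvp}; moreover $\overline v$ is upper semi-continuous, $\underline v$ is lower semi-continuous, and $\underline v \le \overline v$ holds by construction (the liminf-inf is dominated by the limsup-sup). Applying Assumption~\ref{ass:comp} with the roles as stated there — $\overline v$ playing the part of the upper semi-continuous subsolution and $\underline v$ the lower semi-continuous supersolution — yields the reverse inequality $\overline{v} \le \underline{v}$. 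Hence $\overline v = \underline v =: v$, and this common function, being simultaneously a viscosity sub- and supersolution, is by Definition~\ref{def:vissol} a viscosity solution of \eqref{eq:Bellmanibvp}. Since $\overline v$ and $\underline v$ agree, $v$ is in particular continuous on $[0,T]\times\oO$, and at the final time the interpolants of $v_T$ converge uniformly to $v_T$, so $v(T,\cdot)=v_T$; uniqueness of such a solution again follows from Assumption~\ref{ass:comp}.

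The second step is to upgrade the equality $\overline v = \underline v$ to uniform convergence \eqref{conv}. The standard argument here is by contradiction: if $\|v_i - v\|_{L^\infty((0,T)\times\O)} \not\to 0$, then there is an $\eps_0 > 0$ and a subsequence together with nodes $(s_{i(j)}^{k_j}, y_{i(j)}^{\ell_j})$ with $|v_{i(j)}(s_{i(j)}^{k_j}, y_{i(j)}^{\ell_j}) - v(s_{i(j)}^{k_j}, y_{i(j)}^{\ell_j})| \ge \eps_0$. By the uniform $L^\infty$ bound of Theorem~\ref{lem:num_stability} and compactness of $[0,T]\times\oO$, one may pass to a further subsequence along which $(s_{i(j)}^{k_j}, y_{i(j)}^{\ell_j}) \to (t_0,x_0)$. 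If the defect is positive (the $v_i$ overshoot), then $\limsup_j v_{i(j)}(s_{i(j)}^{k_j}, y_{i(j)}^{\ell_j}) \le \overline v(t_0,x_0) = v(t_0,x_0)$, while continuity of $v$ gives $v(s_{i(j)}^{k_j}, y_{i(j)}^{\ell_j}) \to v(t_0,x_0)$; the two together contradict the persistent gap of size $\eps_0$. The case where the $v_i$ undershoot is symmetric, using $\underline v = v$. This delivers \eqref{conv}; convergence on $(0,T)\times\oO$ then follows since the $L^\infty$ norm over $\O$ and over $\oO$ coincide up to the null set $\pO$, or more carefully one notes the envelopes were defined using all nodes in $[0,T]\times\oO$ so the boundary is already included.

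The main obstacle I anticipate is a bookkeeping subtlety rather than a deep one: reconciling the two hypotheses of Assumption~\ref{ass:comp} with the objects in hand. As literally stated, Assumption~\ref{ass:comp} calls its supersolution $\overline v$ and its subsolution $\underline v$, which is the opposite of how those symbols are used for the numerical envelopes (where $\overline v$ is built from limsup and is the \emph{sub}solution). One must be careful to invoke the comparison principle with the correct pairing — the upper semi-continuous subsolution is our $\overline v$, the lower semi-continuous supersolution is our $\underline v$ — so that its conclusion reads $\underline v \ge \overline v$ and closes the loop with the trivial $\underline v \le \overline v$. A second point needing a word of care is that the comparison principle is stated for the boundary value problem without explicit mention of the final-time data; one should note that both $\overline v$ and $\underline v$ satisfy $v_T$ at time $T$ (the interpolants converge uniformly to $v_T \in C(\oO)$), so comparison applies and simultaneously pins down $v(T,\cdot)=v_T$ and uniqueness. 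Everything else — semi-continuity of the envelopes, the inequality $\underline v \le \overline v$, and the contradiction argument for uniform convergence — is routine.
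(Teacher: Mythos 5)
Your proof is correct and follows the same standard Barles--Souganidis route that the paper itself adopts (the paper simply defers to Theorem~6.2 of \cite{max_SIAM} for this step). You correctly identify and handle the role-reversal in the notation of Assumption~\ref{ass:comp}, and the contradiction argument upgrading $\overline v=\underline v$ to uniform convergence is the standard one and is carried out correctly.

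The only point worth a word of caution is your justification of $v(T,\cdot)=v_T$. The envelope $\overline v(T,x)$ is a $\limsup$ over \emph{all} nodes $(\ik s,\il y)\to(T,x)$, including those with $\ik s<T$, so the fact that the interpolants of $v_T$ at time $T$ converge uniformly to $v_T$ does not by itself control $\overline v(T,\cdot)$: one must also rule out a loss of the final condition through a jump of the numerical solution immediately below $t=T$. In the Barles--Souganidis framework this is typically argued via a barrier or by invoking a strong-enough comparison principle that already incorporates the final-time condition into $F$ (as the paper's envelope $F^*$, $F_*$ does), in which case $\overline v=\underline v$ being the unique continuous viscosity solution is itself what forces agreement with $v_T$ at $T$. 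The cited reference handles this; your sketch is in the right spirit but treats it more lightly than a self-contained proof should.
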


\begin{proof}
Follows as in the proof of Theorem 6.2 in \cite{max_SIAM}.
\end{proof}

\section{Numerical experiments} \label{sec:numexp}

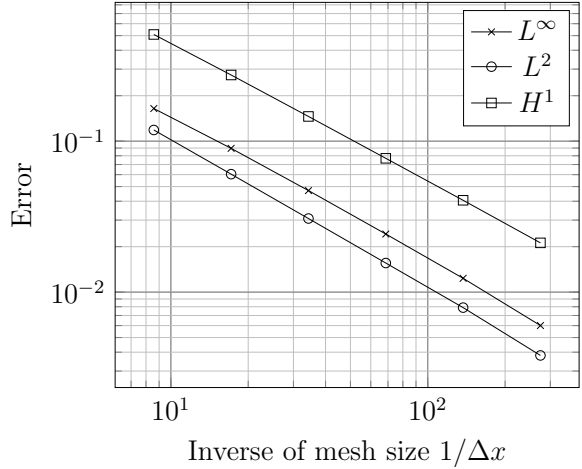
\begin{figure}
\begin{center}
\begin{tikzpicture}[scale=0.9]
\begin{loglogaxis}[xlabel = $\textrm{Inverse of mesh size} \ 1/\Delta x$,
ylabel=Error,grid=both,major grid style={black!50}]
\addplot [color=black,mark=x] coordinates {  
	(8.58156939882731,0.16449567037056068)   
	(17.1631387976546,0.08959787332893066)  
	(34.32627759530913,0.047057328161504874) 
	(68.65255519061786,0.024256140867236042) 
 	(137.30511038123396,0.012344159086025153) 
	(274.61022076246036,0.0060056651263776795)	
};
\addlegendentry{$L^{\infty}$}

\addplot [color=black,mark=o] coordinates {
	(8.58156939882731,0.11855766531960708)
	(17.1631387976546,0.06044214589853559)
	(34.32627759530913,0.030721496351280964)     
	(68.65255519061786,0.015582164914416639)
 	(137.30511038123396,0.007894432881552253)
	(274.61022076246036,0.003806337122879457)
};
\addlegendentry{$L^2$}

\addplot [color=black,mark=square] coordinates {
	(8.58156939882731,0.5088802898111241)   
	(17.1631387976546,0.2742997337502777)   
	(34.32627759530913,0.14573198702745077)   
	(68.65255519061786,0.07696854826437785)  
 	(137.30511038123396,0.04057651992064218)
	(274.61022076246036,0.021196988242819452)
};
\addlegendentry{$H^1$}
\end{loglogaxis}
\end{tikzpicture}
\caption{Approximation error of Experiment 1}
\label{fig:error_plot}
\end{center}
\end{figure}

The first experiment investigates rates of convergence for a known smooth solution. The remaining experiments examine the approximation of solutions with singularities near type changes of boundary conditions as well as the solution behaviour in the vicinity of nonlinear boundary conditions. The code is available from the public repository \cite{github} under the GNU Lesser General Public License.

{\bf Experiment 1 (Rates for smooth known solution):} 
We consider a final time boundary value problem on the square domain $\O = [-1,1]^2$ with Robin conditions on the right face $\pO_t = \{1\} \times (-1,1)$ and Dirichlet conditions on the remaining faces $\pO_D = \pO \setminus \pO_t$. We have the control set $A=[0,1]$ and the final time $T=1$ for the system
\begin{equation}
\label{eq:experiment1}
\begin{aligned}
-\p_t v + \sup_{\a \in A}\left(-(\a+|x|^2/2) \Delta v + xv_x - f^{\a}\right)    & = 0 	&	&\quad\text{in }[0,T)\times\O,\\
-\p_t v + \sup_{\a\in A} \bigl( \a v_x - g^{\a} \bigr)    & = 0 	&	& \quad \text{on }[0,T)\times\pO_t,\\
v \;\;\;  & = 0	&	&\quad\text{on }[0,T)\times\pO_{D},\\[2mm]
v - (1-x^2)(1-y^2) \,    & = 0	&	&\quad \text{on }\{T\}\times\oO.
\end{aligned}
\end{equation}
We choose $g^{\a}$ and $f^{\a}$ such that
\[
v(x,y,t) := t(1-x^2)(1-y^2) + (1-t)\sin{(\pi x)}\cos{\left(\frac{\pi y}{2}\right)}
\]
is the exact solution of \eqref{eq:experiment1}.

The artificial diffusion coefficients are selected quasi-optimally, cf.~Section \ref{sec:scaling}. The time dependent Robin boundary condition is treated fully explicitly.  The time step size is chosen to ensure monotonicity while permitting a large time step, leading to $O(h_i)=O(\Delta x_i)$. The $L^2$, $H^1$ and $L^{\infty}$ errors at time $t=0$, presented also in Figure \ref{fig:error_plot}, obey in essence the same rates as those observed previously \cite{max_SIAM} with Dirichlet conditions and $O(h_i)=O(\Delta x_i)$ scaling:
\begin{center}
\small \begin{tabular}{c | c c | c c | c c} 
 $\Delta x$ & $L^2$ & Rate & $L^{\infty}$ & Rate & $H^1$ & Rate \\ [0.5ex] 
 \hline
 0.1165 & 1.186\text{e-}1 & 0.98 & 1.645\text{e-}1 & 0.92 & 5.089\text{e-}1 & 0.93 \\ 
 0.0583 & 6.044\text{e-}2 & 0.98 & 8.960\text{e-}2 & 0.95 & 2.743\text{e-}1 & 0.94 \\ 
 0.0291 & 3.072\text{e-}2 & 0.99 & 4.706\text{e-}2 & 0.97 & 1.457\text{e-}1 & 0.95 \\ 
 0.0146 & 1.558\text{e-}2 & 0.99 & 2.426\text{e-}2 & 0.98 & 7.696\text{e-}2 & 0.95 \\ 
 0.0073 & 7.894\text{e-}3 & 1.04 & 1.234\text{e-}2 & 1.03 & 4.058\text{e-}2 & 0.96 \\ 
 0.0036 & 3.806\text{e-}3 &      & 6.006\text{e-}3 &      & 2.120\text{e-}2 &      \\  [1ex] 
\end{tabular}
\end{center}

\begin{figure}[ht]
\centering
\includegraphics[width=0.65\textwidth]{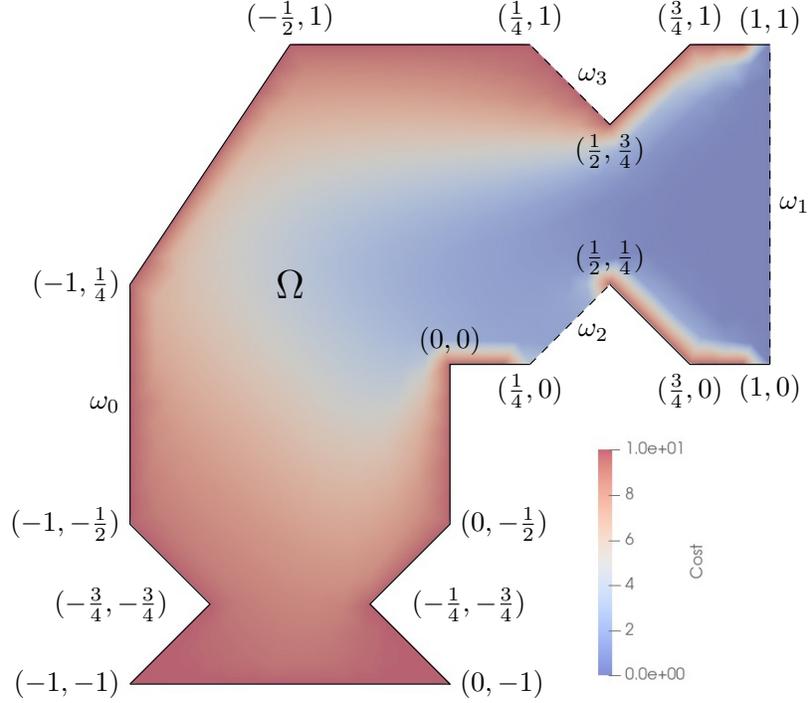}
\caption{Value function of the Skorokhod problem for a coarse mesh size $\Delta x \approx 0.12$}
\label{fig:exp2}
\end{figure}

{\bf Experiment 2 (Skorokhod problem):}
The second numerical experiment is set on a non-convex, less regular domain, which is depicted in Figure~\ref{fig:exp2}. The stochastic controlled process is subject to a terminal cost of $10$ everywhere apart from $\overline{\omega_1}$ where it is 0. There is no running cost. On $\omega_2$ the particle is transported through a Skorokhod reflection independently of the angle of incidence in the direction of the inner normal vector. Ultimately, a particle can avoid penalisation only by reaching $\overline{\omega_1}$ before the terminal time $T = 1$. On $\Omega$ the particle may only choose between an upwards drift and drift to the right:
\begin{subequations}
\begin{align} 
-\p_t v + \sup_{\a}\left(-a^{\a} \Delta v - b^{\a} \cdot \nabla v \right) & = 0 && \textstyle \text{in } [0,T)\times\O,\\ \label{eq:omega2BC}
- b_{\pO} \cdot \nabla v & = 0 & &\textstyle \text{on }[0,T) \times \omega_2 \cup \left\{\left(\frac{1}{4}, 0\right)\right\},\\[1mm] \label{eq:experiment2}
v & = 0	& & \textstyle \text{on }[0,T)\times \overline{\omega_1},\\[1mm] 
v & = 10 & & \textstyle \text{on }[0,T)\times \omega_0 \cup \overline{\omega_3} \cup \left\{\left(\frac{1}{2}, \frac{1}{4}\right)\right\},\\[1mm] 
v & = v_T &	& \textstyle \text{on }\{T\}\times\oO,
\end{align}
\end{subequations}
where $a^{\a} = 0.1(1-x_2)\a$ and $b^{\a} = (- 2\a, 2(\a - 1))^T$ for $\a \in \{0,1\}$. Moreover, $b_{\pO} = (1, -1)^T$ and
\begin{align} \label{eq:vT}
v_T(x) =  \begin{cases} 10 \quad &x \in \oO \setminus \omega_1, \\ 0 \quad & x \in \omega_1. \end{cases}
\end{align}
Hence when drifting to the right the particle is exposed to Brownian noise, while the equation is degenerate when the upward drift is selected. The numerical operators are given by
\begin{subequations}
\begin{align} \label{eq:exp2a}
(\siao E v)_\ell &:= \, \bialo  \nu \langle \nabla v, \nabla \hil \phi \rangle + \langle -b^{\a} \cdot \nabla v, \hil \phi \rangle \\ \label{eq:exp2b}
(\siao I v)_\ell &:= \, \max\left(a^{\a} - \bialo \nu, 0\right) \langle \nabla v, \nabla \hil \phi \rangle. \\
(\siar E v)_\ell &:= 0, \\
(\siar I v)_\ell &:= \frac{v(t,x_1, x_2) - v(t,x_1 - \lambda, x_2 + \lambda)}{\lambda}.
\end{align}
\end{subequations}
Figure \ref{fig:exp2} shows the approximation $v_i$ at time $t=0$ on a coarse mesh. Notice how the introduction of a Skorokhod type boundary gives the particle starting in the vicinity of $\omega_2$ a high probability of reaching the penalty-free exit zone $\overline{\omega_1}$. The node at $(\frac{1}{2},\frac{1}{4})$ already belongs to the Dirichlet boundary. We observe that the penalty of $10$ in the boundary segment between $\omega_1$ and $\omega_2$ leads to a layer-like behaviour of the solution of only one element thickness. The related numerical experiments on finer meshes depicted in Figure \ref{fig:exp3} also exhibit this aspect of the numerical solution.

\begin{figure}[t]
     \centering
     \begin{subfigure}{0.48\textwidth}
         \centering
         \includegraphics[width=\textwidth]{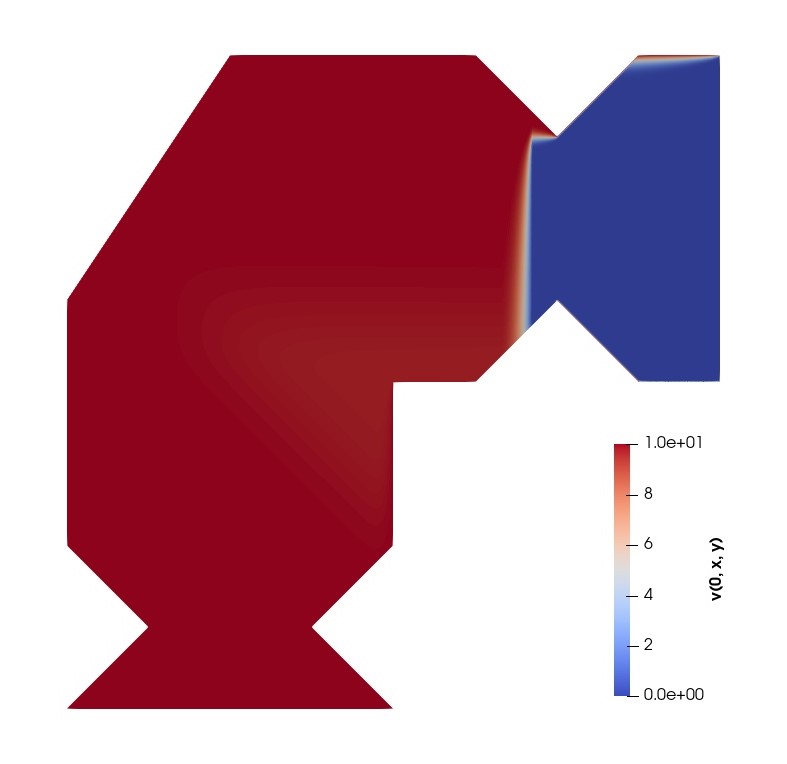}
         \caption{Skorokhod problem}
         \label{fig:exp3a}
     \end{subfigure}
     \hfill
     \begin{subfigure}{0.48\textwidth}
         \centering
         \includegraphics[width=\textwidth]{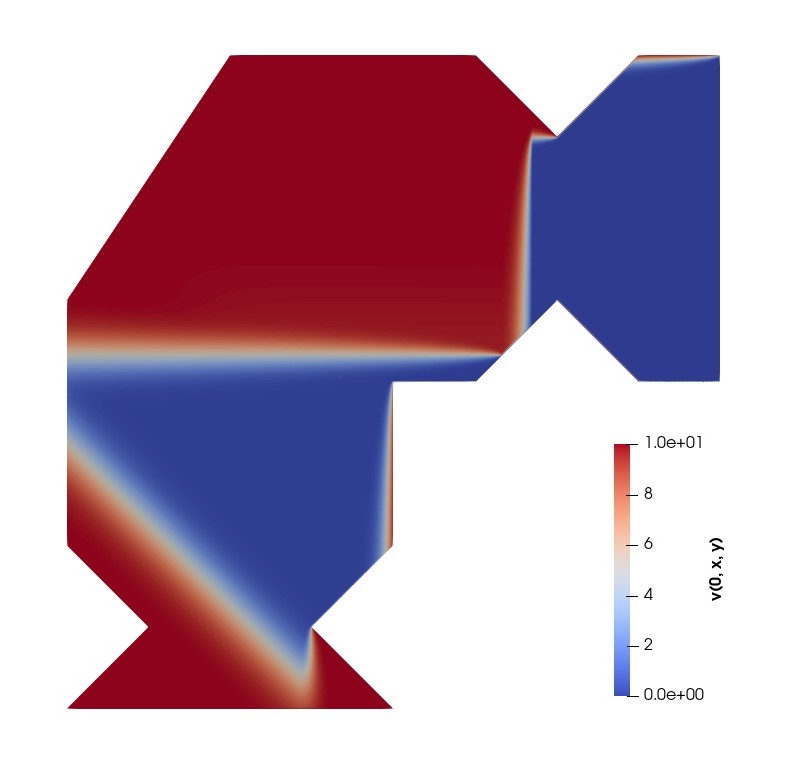}
         \caption{Nonlinear boundary condition}
         \label{fig:exp3b}
     \end{subfigure}
     \hfill
    \caption{Linear and nonlinear boundary conditions on $\omega_2$ with $\Delta x = 0.0035$.}
    \label{fig:exp3}
\end{figure}

{\bf Experiment 3 (Internal barrier and nonlinear boundary conditions):} 
The third experiment is an adaptation of the previous one to examine the effect of nonlinear boundary conditions. We break the adaptation into two parts.

{\em Part (a) (Internal barrier)}: The experiment is identical to the previous one with exception of the drift terms on $\Omega$:
\begin{equation*}
b^{\a}(x) = 
\begin{cases}
\bigl(- 2\a, 2(\a-1)\bigr)^T \quad & : |x-\frac{3}{8}|>\frac{1}{20},\\
\phantom{-\a}\bigl(0, 2(\a-1)\bigr)^T  \quad & : \textrm{otherwise.}
\end{cases}
\end{equation*}
This means that the strip of all $x$ with $|x-\frac{3}{8}| \le \frac{1}{20}$ acts as barrier in $\Omega$: Within this strip there is no process with drift to the right. The only way a particle can cross the strip from the left to the right in order to avoid penalisation is by adopting $\alpha = 1$. In this case the particle {\em might} cross the barrier by means of diffusion; however, there is no drift term to aid the crossing.

The construction results in a value function which at first sight resembles a piecewise constant function. It is close to 10 left of the barrier as the particle is unlikely to reach the penalty-free exit zone $\omega_1$. It is mostly close to 0 right of the barrier; however, reaches 10 at Dirichlet boundary conditions on $\omega_0$ as already indicated in Experiment 2. At the barrier there is an internal layer arising from the possibility of crossing owing to diffusion. 

This description of the value function is matched entirely by the numerical computations with the scheme of this paper: see Figure \ref{fig:exp3a}, where the internal layer as well as the boundary conditions right of the barrier are well resolved.

{\em Part (b) (Nonlinear boundary condition)}: Now the boundary condition on $\omega_2$ is replaced by a nonlinear operator which corresponds to the choice between the previously used Skorokhod reflection and instantaneous transport along the boundary towards the right. In other words, \eqref{eq:omega2BC} is replaced by
\[
\sup \{ - b^0_{\pO} \cdot \nabla v,  - b^1_{\pO} \cdot \nabla v \} = 0 \quad \textstyle \text{on }[0,T) \times \omega_2 \cup \left\{\left(\frac{1}{4}, 0\right)\right\}.
\]
with $b^0_{\pO} = (1,-1)$ and $b^1_{\pO} = (-1,-1)$.

This modification has a striking impact on the behaviour of the controlled system. Now an optimally controlled particle may drift onto the boundary segment $\omega_2$ left of the barrier to be then transported on the boundary past the barrier. Right of the barrier the control is changed to $b^0_{\pO}$ in order to reflect the particle into $\Omega$ to avoid the penalty at the point $(\frac{1}{2}, \frac{1}{4})$ at the end of $\omega_2$.

An approximation of the resulting value function at time $0$ is shown in Figure \ref{fig:exp3b}, which illustrates how this time regions left of the barrier have a value function close to zero since particles located there can now reach the penalty-free exit zone $\omega_1$. Indeed, when computing the solutions for earlier times $t < 0$, one observes further growth of the blue region as more time is available to arrive at $\omega_1$ before termination.

{\bf Experiment 4 (Reflection vs.~termination):} 
We consider a final time boundary value problem on the same domain, but now with a nonlinear boundary condition corresponding to a choice between a Skorokhod reflection and termination of the process in exchange for an oscillatory cost $g^\alpha$:
\begin{align*}
-\p_t v + \sup_{\a} \left(-a^{\a} \Delta v - b^{\a} \cdot \nabla v \right) & = 0 & &\quad\text{in }[0,T)\times\O,\\
\sup_{\a} \bigl( - b_{\pO}^{\a} \cdot \nabla v + c^{\a}_{\pO} v - g^{\a} \bigr) & = 0	& & \quad \textstyle \text{on }[0,T) \times \omega_3 \cup \left\{\left(\frac{1}{4}, 1\right)\right\},\\
v & = 0	& & \quad\text{on }[0,T)\times \overline{\omega_1},\\
v & = 10 & & \textstyle \quad\text{on }[0,T)\times \omega_0 \cup \overline{\omega_2} \cup \left\{\left(\frac{1}{2}, \frac{3}{4}\right)\right\},\\
v & = v_T &	&\quad \text{on }\{T\}\times\oO,
\end{align*}
where $\a \in \{0,1\}$, $v_T$ as in \eqref{eq:vT} and
\begin{align*}
    a^{\a} & = 0.2(1-x_2)(1-\a) + 0.2(1-x_1)\a,\\
    b^{\a} & = (- 2 \a, 2 (\a - 1) )^T, \quad b^{\a}_{\pO} = (\a, \a )^T,\\
    c^{\a} & = (1-\a), \quad g^{\a} = -(10\cos(160 x_1 / \pi+4)(1-\a).
\end{align*}
Note that compared to the two previous examples the Robin type boundary has moved to $\omega_3$ while $\omega_2$ is part of the Dirichlet region with value $10$. Both operators $L^\a$ now have regions of degeneracy, when either $x_1$ or $x_2$ is near $1$. 

The PDE operator on $\Omega$ is discretised according to \eqref{eq:exp2a}--\eqref{eq:exp2b}, while the Robin operators are approximated implicitly, consistent with Assumption \ref{ass:consistency}. The behaviour of the value function $v$ in the vicinity of $\omega_3$ is depicted in Figure~\ref{fig:exp4}. One observes how troughs of $g^0$ are attained by the value function, while near peaks of $g^0$ the numerical scheme switches to the reflection principle. Overall the experiment demonstrates how the framework of the paper not only allows us to approximate nonlinear Robin conditions, but also incorporates a nonlinear switching between Robin conditions on the one hand and Dirichlet conditions on the other hand.

\begin{figure}[t]
\centering
\includegraphics[width=\textwidth]{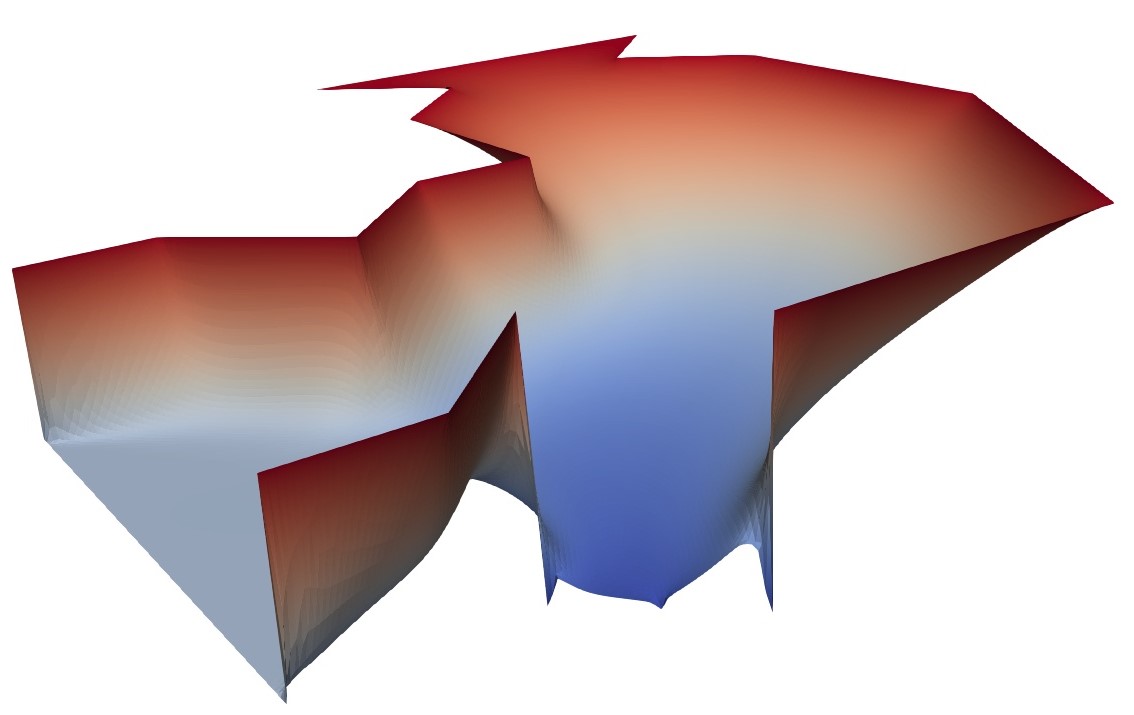}
\caption{Value function with nonlinear boundary condition on $\omega_3$.}
\label{fig:exp4}
\end{figure}


\section*{Acknowledgements}

Bartosz Jaroszkowski gratefully acknowledges the support of the EPSRC grant 1816514. Max Jensen gratefully acknowledges the support of the Dr Perry James Browne Research Centre.


\bibliographystyle{alpha}
\bibliography{references}


\appendix
\section{Appendix: Interpretation of mixed boundary conditions} \label{sec:interpretation}

\small
We briefly sketch in a simplified setting how mixed boundary conditions can arise from an underlying optimal control problem. For a start we assume here that the solution $v$ of \eqref{eq:Bellmanibvp} is smooth. In optimal control formulations the coefficients $c^\alpha$ and $c_{\pO}^\alpha$ are typically either $0$ or they all coincide with some constant in order to model a discounting of cost. We shall assume the former. 

We consider a particle, or agent, which occupies the state $\bx (t) \in \oO$ at time $t \in [0,T]$. Its movements are described by the following rules:
\begin{enumerate}
\item Suppose $\bx (t) \in \Omega$, $t < T$ and the control $\alpha \in A$ is selected. Then the particle's immanent movement is described by the SDE
\begin{align} \label{eq:SDE}
\d \bx = b^\alpha(\bx) \, \d t + \sqrt{2 \, a^\alpha} \, \d W,
\end{align}
where $W$ is a $d$-dimensional Brownian motion. While the particle follows \eqref{eq:SDE} it is subject to the cost $f^\alpha \d t$.
\item Suppose $\bx (t) \in \pO_D^\circ$, $t < T$ and the control $\alpha \in A$ is selected. Here $\pO_D^\circ$ refers to the interior of $\pO_D$ relative to $\pO$. Then, in the context of viscosity boundary conditions, the particle may either follow \eqref{eq:SDE} at the running cost $f^\alpha \d t$ or terminate its movement at a cost of $g(\bx(t))$. If instead pointwise Dirichlet conditions were imposed in Definition~\ref{def:vissol} then the boundary conditions would correspond to the guarantee that the particle terminates its movement.
\item  Suppose $\bx (t) \in \pO_R^\circ$, $t < T$ and the control $\alpha \in A$ is selected. Then the Skorokhod reflection principle may apply. Indeed, as described in \cite[Sections 1.4, 3.1.3, \ldots]{FDbook}, upon reaching the boundary the particle may instantaneously be transported a distance $b_{\pO}^\alpha \, \delta$ where $\delta > 0$ is small. Alternatively, because of the context of viscosity boundary conditions, the particle may continue to follow \eqref{eq:SDE}. To remain within the scope of \cite{FDbook} we assume $g^\alpha = 0$ on $\pO_R$.
\item Suppose $\bx (t) \in \pO_t^\circ$, $t < T$ and the control $\alpha \in A$ is selected. Then the particle may either move according to
\begin{align} \label{eq:SDE_pO}
\d \bx = b_{\pO}^\alpha(\bx) \d t
\end{align}
with running cost $g^\alpha \d t$ or according to \eqref{eq:SDE} with running cost $f^\alpha \d t$.
\item Suppose $t = T$ and the particle's movement has not yet terminated at a Dirichlet boundary then the final time cost $v_T(\bx(T))$ incurs.
\item Suppose that $\bx (t) \in \partial (\pO_D) \cup \partial (\pO_R) \cup \partial (\pO_t)$, $t < T$. Here the outer $\partial$ of $\partial (\pO_X)$ refers to the boundary of $\pO_X$ relative to $\pO$ where $X \in \{ D, R, t\}$. Then the particle's behaviour may be selected from multiple of the above scenarios. E.g. if $\bx (t) \in \partial (\pO_D) \cup \partial (\pO_R)$ then the particle movement may terminate, the particle may be reflected or it may be transported according to \eqref{eq:SDE}.
\end{enumerate}
All these possible scenarios occur when representing uncertain market price of volatility risk in a Heston model found in \cite{HestonPaper}.

We now link the above description of the particle by means of the value function to the HJB final time boundary value problem \eqref{eq:Bellmanibvp}. Let $\ba: [0,T] \to A$ represent a choice of controls for each time $s \in [0,T]$. Similarly, let $\xi_\Omega, \xi_{\pO_D}, \xi_{\pO_R}, \xi_{\pO_t}: [0,T] \times \oO \to \{ 0, 1\}$ be indicator functions such that $\supp \xi_{\pO_X} \subset \overline{\pO_X}$ for $X \in \{D,R,t\}$, where $\overline{\pO_X}$ is the closure of $\pO_X$ relative to $\pO$. Furthermore,
\[
\xi_\Omega + \xi_{\pO_D} + \xi_{\pO_R} + \xi_{\pO_t} \equiv 1.
\]
Where $\xi_\Omega = 1$ the particle path $\bx$ obeys \eqref{eq:SDE}, where $\xi_{\pO_D} = 1$ the particle terminates, where $\xi_{\pO_R} = 1$ the particle is reflected and where $\xi_{\pO_t} = 1$ the particle follows~\eqref{eq:SDE_pO}. Since the particle terminates where $\xi_{\pO_D} = 1$ we requite $\xi_{\pO_D}(s_1,x) = 1 \Rightarrow \xi_{\pO_D}(s_2,x) = 1$ for $s_1 \le s_2$. The value function $v$ at $(t,x)$ is the smallest cost realised among all possible choices for $\ba$ and ${\boldsymbol \xi} = (\xi_\Omega, \xi_{\pO_D}, \xi_{\pO_R}, \xi_{\pO_t})$:
\begin{align*}
v(t,x) = 
\inf_{\ba, {\boldsymbol \xi}} {\bf E}_{xt} & \Bigl ( \int_t^\tau \xi_\Omega(\bx(s)) \, f^{\ba(s)}(\bx(s)) + \xi_{\pO_t}(\bx(s)) \, g^{\ba(s)}(\bx(s)) \d s\\
& \, + \xi_{\pO_D}(\bx(\tau)) \, g(\bx(\tau)) + \xi_\Omega(\bx(\tau)) \,  v_T(\bx(\tau)) \Bigr),
\end{align*}
where $\tau$ is the exit time from $[0,T) \times \Omega$ of $\bx$ and ${\bf E}_{xt}$ the expectation conditional to $\bx(t) = x$. 

We fix some $\ba, {\boldsymbol \xi}$ which are not necessarily optimal. Suppose that $\bx(s) \in \supp \xi_{\pO_t}$ for a short duration $[t,t+\eps) \ni s$. Then
\begin{align*}
g^{\ba(t)}(\bx(t)) & = \lim_{h \to 0} \frac{1}{h} \int_t^{t+h} g^{\ba(s)}(\bx(s)) \d s
\ge - \lim_{h \to 0} \frac{v(t+h, \bx(t+h)) - v(t,x)}{h}\\
& = - \partial_t v(t,\bx) - \nabla v(t,\bx) \cdot \dot{\bx} = - \partial_t v(t,\bx) - \nabla v(t,\bx) \cdot b_{\pO}^\alpha(\bx).
\end{align*}
Here the first equality follows from continuity, the inequality from the dynamic programming principle, the second equality from the chain rule and the third from \eqref{eq:SDE_pO}. 

Now, suppose that $\bx(s) \subset \supp \xi_\Omega$ for a short duration $[t,t+\eps) \ni s$. Then by a similar argument, detailed in \cite{FDbook}, one finds
\begin{align*}
f^{\ba(t)}(\bx(t)) \ge - \partial_t v(t,\bx) - \nabla v(t,\bx) \cdot b^\alpha(\bx) - a^\alpha \nabla v(t,\bx).  
\end{align*}
On $\supp \xi_{\pO_D}$ we find $g(\bx(t)) \ge v(t,\bx(t))$ as the minimal cost cannot be more than the cost of termination. 

Suppose now that the particle is located at $x \in \supp \xi_{\pO_R}$. It cannot be more beneficial for the particle to be at $x + b_{\pO}^\alpha \, \lambda$ as it will immanently be transported there. Thus $v(t,x) \le v(t,x + b_{\pO}^\alpha \, \lambda)$ and therefore, with $\lambda \to 0$,
\[
- b_{\pO}^\alpha(x) \cdot \nabla v(t,x) \le 0.
\]
When and where-ever the choice of $\ba, {\boldsymbol \xi}$ is optimal, the respective above inequality turns into an equality. With the compactness of $A$ and the continuous dependence of the coefficients on $\alpha$, such optimal controls exist. Therefore, taking suprema over $A$, one obtains that the value function solves \eqref{eq:Bellmanibvp}, at least conceptually, with boundary conditions in the viscosity sense. We refer here to the viscosity sense because the use of semi-continuous envelopes in Definition 1 is interpreted as permitting \eqref{eq:SDE} as transport law on all of the closure $\oO$ and as offering at the interfaces between boundary regions multiple boundary operators for the choice of the optimal strategy, like indicated in scenario 6 of the above list. We note that the choice between \eqref{eq:SDE} and the various boundary operators will in general be subject to some delicate restrictions, arising from the sub- and superjets. At the boundary these jets are increased in size compared to their counterparts in the domain interior \cite[Remark 2.7]{Crandall:1992ta}.


\end{document}